\renewcommand{\bar}{\overline}
\newcommand{\eps}{\varepsilon}
\newcommand{\CC}{\mathbb{C}}
\newcommand{\FF}{\mathbb{F}}
\newcommand{\LL}{\mathbb{L}}
\newcommand{\MM}{\mathbb{M}}
\newcommand{\PP}{\mathbb{P}}
\newcommand{\QQ}{\mathbb{Q}}
\newcommand{\RR}{\mathbb{R}}
\newcommand{\ZZ}{\mathbb{Z}}
\newcommand{\hatL}{\widehat{\LL}}
\newcommand{\Qp}{\QQ_p}
\newcommand{\Cp}{\CC_p}
\newcommand{\Fp}{\FF_p}
\newcommand{\Fpbar}{\bar{\FF}_p}
\newcommand{\PCp}{\PP^1(\Cp)}
\newcommand{\calC}{{\mathcal C}}
\newcommand{\calE}{{\mathcal E}}
\newcommand{\calM}{{\mathcal M}}
\newcommand{\calO}{{\mathcal O}}
\newcommand{\Kbar}{\overline{K}}
\newcommand{\PFpbar}{\PP^1(\Fpbar)}
\newcommand{\PKbar}{\PP^1(\overline{K})}
\newcommand{\la}{\langle}
\newcommand{\ra}{\rangle}
\DeclareMathOperator{\Aut}{Aut}
\DeclareMathOperator{\PGL}{PGL}
\DeclareMathOperator{\ord}{ord}
\newcommand{\Dbar}{\bar{D}}
\renewcommand{\mod}{\mbox{mod }}
\theoremstyle{plain}
\newtheorem{thm}{Theorem}[section]
\newtheorem{lemma}[thm]{Lemma}
\theoremstyle{definition}
\newtheorem{defin}[thm]{Definition}
\newtheorem{claim}{Claim}[thm]
\theoremstyle{remark}
\newtheorem{example}[thm]{Example}
\numberwithin{equation}{section}
\title{Discreteness of postcritically finite maps in $p$-adic moduli space}
\date{May 27, 2020}
\subjclass[2010]{11S82, 37P20, 37P45}
\keywords{PCF map, Latt\`{e}s map, Thurston rigidity}
\author{Robert~L. Benedetto}
\address[Benedetto]{Amherst College, Amherst, MA 01002}
\email{rlbenedetto@amherst.edu}
\author{Su-Ion Ih}
\address[Ih]{University of Colorado, Boulder, CO 80309, and
\\ Korea Institute for Advanced Study, Seoul 02455}
\email{ih@math.colorado.edu}
\begin{document}

\begin{abstract}
Let $p \geq 2$ be a prime number and let $\Cp$ be the completion of an algebraic closure 
of the $p$-adic rational field $\Qp$. Let $f_c(z)$ be a one-parameter family of
rational functions of degree $d\geq 2$, where the coefficients are meromorphic functions
defined at all parameters $c$ in some open disk $D\subseteq\Cp$.
Assuming an appropriate stability condition, we prove that the parameters $c$ for which
$f_c$ is postcritically finite (PCF) are isolated from one another in the $p$-adic disk $D$,
except in certain trivial cases. In particular, all PCF parameters of the family
$f_c(z)=z^d+c$ are $p$-adically isolated.
\end{abstract}

\maketitle


\section{Introduction}
Let $K$ be a field with algebraic closure $\Kbar$,
and let $f(z)\in K(z)$ be a separable rational function.
The degree of $f$ is defined to be the maximum of the degrees
of the numerator and denominator of $f$.
Define $f^n(z)$ to be the $n$-th iterate of $f$ under composition;
that is, $f^0(z)=z$, and for each $n\geq 0$, $f^{n+1}(z)=f\circ f^n(z)$.
The \emph{forward orbit} of a point $x\in\PKbar$ is the set
$\{f^n(x) : n\geq 0\}$; the \emph{strict forward orbit} of $x$ is $\{f^n(x) : n\geq 1\}$.
We say $x$ is \emph{periodic} under $f$ if $f^n(x)=x$ for some $n\geq 1$;
equivalently, $x$ belongs to its own strict forward orbit.
Similarly, we  say that $x$ is \emph{preperiodic} under $f$ if there are integers
$n>m\geq 0$ such that $f^n(x)=f^m(x)$;
equivalently, the forward orbit of $x$ is finite.
We say $x$ is \emph{strictly preperiodic} if it is preperiodic but not periodic.
We say that $f$ is \emph{postcritically finite}, or PCF, if every
critical point $c\in\PKbar$ of $f$ is preperiodic under $f$.

Let $f_c(z)$ be a one-parameter analytic family of rational functions of degree $d$.
That is, each coefficient of $f_c$ is an analytic function of $c\in D$, where $D$ is some
set of parameters; and for each specific parameter $c\in D$, 
the function $f_c(z)\in K(z)$ is a rational function of degree $d$.
In this paper, we consider the set of parameters $c\in D$ for which $f_c$ is PCF;
we call such points \emph{PCF parameters}.
In particular, our focus is on the case that $K=\Cp$, the completion of an algebraic closure of the
field $\Qp$ of $p$-adic rationals, where $p\geq 2$ is a prime number.

There are a number of cases when it is easy to see that there are many PCF parameters.
First, suppose that $f_c=h_c^{-1} \circ g \circ h_c$,
where $g(z)\in K(z)$ is a (fixed) rational function of degree $d\geq 2$,
and where $h_c(z)$ is a one-parameter family of linear fractional transformations.
This is the \emph{isotrivial} case, and if $g(z)$ itself is PCF,
then so is $f_c$ for every parameter $c$.
Second, suppose that $E_c$ is a one-parameter family of elliptic curves, $m\geq 2$ is an integer,
and $f_c$ is the \emph{flexible Latt\`{e}s map} associated with the multiplication-by-$m$ map
$[m]:E_c\to E_c$.
(That is, if we write $E_c$ in Weierstrass form, then $f_c\circ x_c = x_c\circ [m]$,
where $x_c:E_c\to \PP^1$ is the $x$-coordinate map. See Definition~\ref{def:lattes}
for a slightly more general definition of flexible Latt\`{e}s maps.)
It is well known, and easy to check, that all Latt\`{e}s maps are PCF, and hence
every $c$ is a PCF parameter for such a family.

As a third, more complicated case, let $K=\CC$,
and consider the quadratic polynomial family $f_c(z)=z^2+c\in\CC[c,z]$.
Since the critical point at $z=\infty$ is fixed, a given map $f_c$ is PCF if and only if
the critical point at $z=0$ is preperiodic. The parameters $c$ for which $z=0$
is strictly preperiodic are called \emph{Misiurewicz parameters}, and they
are dense in the boundary $\partial\mathbf{M}$ of the Mandelbrot set $\mathbf{M}$.
(See, for example,  \cite[Theorem~VIII.1.5ff]{CG} or \cite[Proposition~2.1]{Mcm97}.
The parameters $c$ for which $z=0$ is periodic lie in the interior of $\mathbf{M}$
but also accumulate on $\partial\mathbf{M}$.)
Hence, the complex quadratic polynomial family features PCF parameters
accumulating at many points, including one another.
(This phenomenon is not unique to the complex case $K=\CC$.
Indeed, Rivera-Letelier described similar Misiurewicz bifurcations in the $p$-adic setting
in \cite{Riv5}; see Example~\ref{ex:juan}.)

By contrast, our main result, Theorem~\ref{thm:pcfisol} below,
states that if $K=\Cp$ and the family $f_c$ is stable in an appropriate sense,
then such accumulation of PCF parameters is impossible.
The underlying flavor of this statement is similar to that of
Scanlon's theorem \cite{Sca;98,Sca;99} on the Tate-Voloch conjecture \cite{TV;96},
although our methods are different.
See Section~\ref{ssec:disks} for the notation $D(b,R)$ and $\Dbar(b,R)$ for
open and closed disks in $\Cp$, and Section~\ref{ssec:merom}
for the field $\MM_p(D)$ of $p$-adic
meromorphic functions on a disk $D\subseteq\Cp$;
in addition, $\PP^1(\MM_p(D))$ of course denotes $\MM_p(D)\cup\{\infty\}$.

\begin{thm}
\label{thm:pcfisol}
Let $p\geq 2$ be a prime number, let $S>0$,
let $\Phi(c,z)\in\MM_p(D(0,S))(z)$,
and suppose that for each $c\in D(0,S)$,
\[ f_c(z) := \Phi(c,z) \]
is a rational function in $\Cp(z)$ of degree $d\geq 2$.
Let $\alpha_1,\ldots,\alpha_{2d-2}\in\PP^1\big(\MM_p(D(0,S))\big)$
be meromorphic functions on $D(0,S)$ such that for each $c\in D(0,S)$,
the critical points of $f_c$, repeated according to multiplicity,
are $\alpha_1(c),\ldots, \alpha_{2d-2}(c)$.

For each $i=1,\ldots, 2d-2$, suppose that there are
integers $N_i>M_i\geq 0$ and
rational open disks $U_{i,0},\ldots,U_{i,N_i}\subseteq \PCp$
such that for all $c\in D(0,S)$,
\begin{itemize}
\item $\alpha_i(c)\in U_{i,0}$,
\item $f_c(U_{i,j}) \subseteq U_{i,j+1}$ for all $j=0,\ldots, N_i-1$, and
\item $U_{i,N_i}\subseteq U_{i,M_i}$.
\end{itemize}
Then either
\begin{enumerate}
\item for any $0<s<S$, there are
only finitely many $c\in \Dbar(0,s)$ such that
$f_c$ is postcritically finite,
\item for every $c\in D(0,S)$,
$f_c$ is a flexible Latt\`{e}s map, or
\item for every $b,c\in D(0,S)$,
$f_b$ is conjugate to $f_c$.
\end{enumerate}
\end{thm}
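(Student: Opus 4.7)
The plan is to argue by contradiction: assume (a) fails, so for some $0 < s < S$ there exists an infinite set $\mathcal{S} \subseteq \bar{D}(0,s)$ of PCF parameters, and deduce (b) or (c). Set $T_i := N_i - M_i$. The inclusion $U_{i,N_i} \subseteq U_{i,M_i}$ lets us extend the stable chain of disks periodically, so for every $c \in D(0,S)$ and every $j \geq M_i$, the iterate $f_c^j(\alpha_i(c))$ lies in a known rational disk (depending only on $j$ modulo $T_i$) and defines a meromorphic function of $c$ on $D(0,S)$. For each PCF parameter $c \in \mathcal{S}$ and each $i$, the orbit of $\alpha_i(c)$ has a well-defined tail $k_i(c) \in \NN$ and period $p_i(c) \in \NN_{\geq 1}$. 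The tuple $\bigl(k_1(c), p_1(c), \ldots, k_{2d-2}(c), p_{2d-2}(c)\bigr)$ takes countably many possible values, so by the pigeonhole principle some tuple $(k_1^\ast, p_1^\ast, \ldots, k_{2d-2}^\ast, p_{2d-2}^\ast)$ is realized by an infinite subset $\mathcal{S}' \subseteq \mathcal{S}$.

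For each $i$, define the meromorphic function on $D(0,S)$,
\[
E_i(c) := f_c^{k_i^\ast + p_i^\ast}(\alpha_i(c)) - f_c^{k_i^\ast}(\alpha_i(c)),
\]
interpreted in a suitable affine chart on the appropriate rational disk (or, more uniformly, as the difference of two $\PCp$-valued meromorphic functions). By the choice of combinatorial type, $E_i$ vanishes on the infinite set $\mathcal{S}' \subset \bar{D}(0,s)$. The identity principle for $p$-adic meromorphic functions --- which rests on the fact that a nonzero analytic function on $D(0,S)$ has only finitely many zeros in any closed subdisk $\bar{D}(0,s)$ with $s < S$ --- forces $E_i \equiv 0$ on all of $D(0,S)$. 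Running this argument for each $i = 1, \ldots, 2d-2$ shows that every critical point of $f_c$ is preperiodic for every $c \in D(0,S)$; hence every $f_c$ in the family is PCF, with common post-critical combinatorial type $(k_i^\ast, p_i^\ast)_i$.

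The remaining step --- and I expect the principal obstacle --- is to pass from ``every $f_c$ is PCF with a fixed combinatorial type'' to ``the family is flexible Latt\`{e}s or isotrivial.'' This is a non-archimedean analog of Thurston rigidity: in the complex setting, Thurston's theorem asserts that a PCF rational map is determined, up to $\PGL_2$-conjugacy, by its post-critical combinatorics, except for the flexible Latt\`{e}s class. Applied to our family of PCF maps of a fixed combinatorial type over $D(0,S)$, it forces either that each $f_c$ is a flexible Latt\`{e}s map (case (b)) or that the family is isotrivial (case (c)). The technical core of the paper presumably consists of establishing this rigidity statement in the $p$-adic setting --- perhaps via a pullback or contraction argument on an appropriate non-archimedean moduli or Teichm\"{u}ller-type space --- together with the verification that the conjugating linear fractional transformations can be chosen meromorphically in $c$, so that pointwise $\PGL_2(\Cp)$-conjugacy upgrades to family-level isotriviality in case (c).
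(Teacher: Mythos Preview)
Your reduction step has a genuine gap. Negating conclusion~(a) gives only an \emph{infinite} set $\mathcal{S}\subseteq\Dbar(0,s)$ of PCF parameters, not an uncountable one, and an infinite set distributed among countably many combinatorial types need not have any single type realized infinitely often (take $\mathbb{N}=\bigcup_n\{n\}$). So the pigeonhole does not produce an infinite $\mathcal{S}'$, and the identity principle never fires; a priori you can only conclude that the preperiodic-parameter locus is countable, which does not recover~(a). This gap is exactly what the paper's Theorem~5.1 fills, and its proof is where the stability hypotheses do real work rather than merely placing $E_i$ in an affine chart. One analyzes the return map $f_c^{N_i-M_i}$ on the disk $U_{i,M_i}$, splitting into an attracting case (handled by a quantitative lemma controlling when an orbit in an attracting basin can be preperiodic without forcing a critical point in the basin to wander) and an indifferent case (handled by Rivera-Letelier's $p$-adic iterative logarithm, which packages \emph{all} periodicity conditions on the disk into the vanishing of a single analytic function of $c$). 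Only after this does one get the genuine dichotomy ``finitely many preperiodic parameters in each $\Dbar(0,s)$'' versus ``a fixed identity $f_c^{n_i}(\alpha_i(c))=f_c^{m_i}(\alpha_i(c))$ for all $c$''. Example~7.3 of the paper shows that without the stability hypotheses PCF parameters can accumulate, so some such dynamical input is unavoidable.

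On the rigidity step your instinct is right but the mechanism is not what you guess. No $p$-adic Teichm\"uller theory is developed: since $\Cp\cong\CC$ as abstract fields, the classical Thurston rigidity theorem transfers verbatim. The remaining work, once every $f_c$ is PCF, is (i) to record not only the self-relations but also all cross-relations $f_c^{e}(\alpha_i)=f_c^{e'}(\alpha_j)$ that hold on an uncountable set and hence identically; (ii) to treat the Latt\`es-compatible combinatorics by a separate lemma showing that such a family is either a genuine flexible Latt\`es family on all of $D(0,S)$ or isotrivial; and (iii) to upgrade ``$f_c$ conjugate to a fixed $g$ for uncountably many $c$'' to ``for all $c$'': one writes the conjugacy explicitly via three postcritical marked points $\beta_1,\beta_2,\beta_3$ varying meromorphically in $c$, and rules out degeneration (collision of the $\beta_i$) by a good-reduction argument over the Laurent-series field $\Cp((c))$. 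So the meromorphic-dependence issue you flag is real, but it is resolved algebraically rather than by any contraction on a moduli space.
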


The stability conditions of Theorem~\ref{thm:pcfisol}, i.e., the bullet points
concerning the disks $U_{i,j}$, happen to be satisfied by certain families of good reduction,
which we now define. Recall that a non-archimedean field $K_v$
with absolute value $|\cdot|_v$ has ring of integers
$\calO_v:=\{x\in K_v : |x|_v\leq 1\}$, maximal ideal
$\calM_v:=\{x\in K_v : |x|_v< 1\}$, and residue field $k_v:=\calO_v/\calM_v$.
(For $K_v=\Cp$, the residue field is isomorphic to $\Fpbar$, the algebraic closure
of the field $\Fp$ of $p$ elements.)


\begin{defin}
\label{def:goodred}
Let $K_v$ be a non-archimedean field with absolute value $|\cdot|_v$,
ring of integers $\calO_v$, and residue field $k_v$,
and let $f(z)\in K_v(z)$ be a rational function of degree $d\geq 1$.
Write $f=g/h$, where $g,h\in\calO_v[z]$, with at least one coefficient $a$
of $g$ or $h$ satisfying $|a|_v=1$.
Define $\bar{g},\bar{h}\in k_v[z]$ by reducing each coefficient of $g,h$
modulo $v$. We say $f$ has (\emph{explicit}) \emph{good reduction}
if its reduction, the rational function $\bar{f}:=\bar{g}/\bar{h}\in k_v(z)$,
satisfies $\deg(\bar{f})=d$.
\end{defin}

The reduction map from $\calO_v\to k_v$
induces a map $\PP^1(K_v)\to \PP^1(k_v)$, which we also denote $x\mapsto \bar{x}$.
If $f\in K_v(z)$ has good reduction,
then $\overline{f(x)} = \bar{f} (\bar{x})$ for every $x\in\PP^1(K_v)$.


\begin{thm}
\label{thm:goodpcf}
Let $p\geq 2$ be a prime number, let $S>0$,
let $\Phi(c,z)\in\MM_p(D(0,S))(z)$,
and let $\alpha_1,\ldots,\alpha_{2d-2}\in\PP^1\big(\MM_p(D(0,S))\big)$.
Suppose that for each $c\in D(0,S)$,
\begin{itemize}
\item
$f_c(z) := \Phi(c,z)$ is a rational function in $\Cp(z)$ of degree $d\geq 2$
and of $p$-adic explicit good reduction;
\item
the reduction $\bar{f}_c\in\Fpbar(z)$ satisfies
$\bar{f}_c = \bar{f}_0$;
\item
the critical points of $f_c$, repeated according to multiplicity,
are $\alpha_1(c),\ldots, \alpha_{2d-2}(c)$; and
\item
for each $i=1,\ldots,2d-2$, the reduction $\overline{\alpha_i(c)}\in\PFpbar$
satisfies $\overline{\alpha_i(c)}=\overline{\alpha_i(0)}$.
\end{itemize}
Then either 
\begin{enumerate}
\item for any $0<s<S$, there are
only finitely many $c\in \Dbar(0,s)$ such that
$f_c$ is postcritically finite,
\item for every $c\in D(0,S)$,
$f_c$ is a flexible Latt\`{e}s map, or
\item for every $b,c\in D(0,S)$,
$f_b$ is conjugate to $f_c$.
\end{enumerate}
\end{thm}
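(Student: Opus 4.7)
The plan is to deduce Theorem~\ref{thm:goodpcf} from Theorem~\ref{thm:pcfisol} by building the combinatorial data $(N_i, M_i, U_{i,j})$ required by the latter directly out of residue disks, using only the good-reduction hypotheses. The central fact I will use is that any $f \in \Cp(z)$ of good reduction respects residue disks: for each $\bar{x} \in \PFpbar$, the fiber $D(\bar{x}) \subseteq \PCp$ of the reduction map is sent by $f$ into the fiber $D(\bar{f}(\bar{x}))$. Under the additional hypothesis $\bar{f}_c = \bar{f}_0$, every map in the family therefore acts on residue disks via one and the same dynamical system $\bar{f}_0 \in \Fpbar(z)$; and residue disks in $\PCp$ are precisely rational open disks (either $\{z : |z-a|_p < 1\}$ or $\{z : |z|_p > 1\} \cup \{\infty\}$ in a suitable coordinate), so they are legitimate candidates for the $U_{i,j}$.

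Fix $i \in \{1, \ldots, 2d-2\}$ and set $\beta_i := \overline{\alpha_i(0)} \in \PFpbar$. The coefficients of $\bar{f}_0$ together with the coordinates of $\beta_i$ lie in a common finite subfield $L_i \subseteq \Fpbar$, so the forward orbit $\{\bar{f}_0^j(\beta_i) : j \geq 0\}$ is contained in the finite set $\PP^1(L_i)$ and is therefore preperiodic. I would choose integers $N_i > M_i \geq 0$ with $\bar{f}_0^{N_i}(\beta_i) = \bar{f}_0^{M_i}(\beta_i)$, and for $0 \leq j \leq N_i$ I would take $U_{i,j} \subseteq \PCp$ to be the residue disk around $\bar{f}_0^j(\beta_i)$. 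The three bullet points of Theorem~\ref{thm:pcfisol} then fall into place: (i) $\alpha_i(c) \in U_{i,0}$, because the hypothesis forces $\overline{\alpha_i(c)} = \overline{\alpha_i(0)} = \beta_i$ for every $c \in D(0,S)$; (ii) $f_c(U_{i,j}) \subseteq U_{i,j+1}$, by the residue-disk property of good reduction together with $\bar{f}_c = \bar{f}_0$; and (iii) $U_{i,N_i} = U_{i,M_i}$, by the choice of $N_i, M_i$. Applying Theorem~\ref{thm:pcfisol} then delivers exactly the three alternatives asserted in Theorem~\ref{thm:goodpcf}.

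Because Theorem~\ref{thm:pcfisol} does all of the analytic and dynamical work, there is no serious obstacle in this reduction; the argument is essentially a bookkeeping exercise in residue disks. The one point that deserves care is the possibility that some $\alpha_i$ or some iterate $\bar{f}_0^j(\beta_i)$ equals $\infty$, but the good-reduction framework treats $\infty$ on the same footing as finite points of $\PFpbar$, so this case is absorbed without change. It is also worth emphasizing that preperiodicity of $\beta_i$ under $\bar{f}_0$ is automatic from finiteness of $\PP^1(L_i)$ and does \emph{not} require $\bar{f}_0$ itself to be PCF---any orbit in $\PFpbar$ under a rational map over $\Fpbar$ is automatically preperiodic, which is what makes the good-reduction hypothesis strong enough to feed directly into the stability framework of Theorem~\ref{thm:pcfisol}.
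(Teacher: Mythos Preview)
Your proposal is correct and follows essentially the same route as the paper: define the $U_{i,j}$ to be the residue disks over the points $\bar{f}_0^j(\overline{\alpha_i(0)})$, use good reduction together with $\bar{f}_c=\bar{f}_0$ to check the mapping conditions, invoke finiteness of a suitable subfield of $\Fpbar$ to obtain the preperiodicity $\bar{f}_0^{N_i}(\beta_i)=\bar{f}_0^{M_i}(\beta_i)$, and then apply Theorem~\ref{thm:pcfisol}. The paper's argument is identical in substance and nearly so in detail.
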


As a special case of Theorem~\ref{thm:goodpcf}, we can show that
the unicritical family $z^d+c$ over $\Cp$ has all PCF parameters isolated.
Here is a more precise statement.

\begin{thm}
\label{thm:specialpcf}
Let $p\geq 2$ be a prime number, and let $d\geq 2$ be an integer. Define
\[ f_c(z) := z^d + c .\]
If $c\in\Cp$ is a parameter for which $f_c$ is postcritically finite, then $|c|_p\leq 1$.
Moreover, for any $a\in\Cp$ with $|a|_p\leq 1$ and for any radius $0<s<1$,
there are only finitely many $c\in \Dbar(a,s)$ such that
$f_c$ is postcritically finite.
\end{thm}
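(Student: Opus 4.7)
The plan is to handle the bound $|c|_p \leq 1$ directly and to reduce the isolation claim to Theorem~\ref{thm:goodpcf} after translating to a disk centered at $0$.

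For the bound, I would argue that if $|c|_p > 1$, then $|f_c(0)|_p = |c|_p > 1$, and the ultrametric inequality gives $|f_c(z)|_p = |z|_p^d$ for all $z$ with $|z|_p > 1$; induction yields $|f_c^n(0)|_p = |c|_p^{d^{n-1}} \to \infty$, so the forward orbit of $0$ is infinite and $f_c$ cannot be PCF. Contrapositively, any PCF parameter satisfies $|c|_p \leq 1$.

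For the isolation claim I would set $g_c(z) := f_{c+a}(z) = z^d + (c+a)$ and apply Theorem~\ref{thm:goodpcf} on the open unit disk $D(0,1)$. The hypotheses are straightforward to verify: for $c \in D(0,1)$ one has $|c+a|_p \leq \max(|c|_p,|a|_p) \leq 1$, so $g_c$ has explicit good reduction $\bar{g}_c(z) = z^d + \overline{c+a}$ of degree $d$; since $|c|_p < 1$, $\overline{c+a} = \bar{a}$, hence $\bar{g}_c = \bar{g}_0$; and the critical points are $0$ (multiplicity $d-1$) and $\infty$ (multiplicity $d-1$), constant in $c$ and a fortiori with constant reduction.

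The main work is then to exclude alternatives (2) and (3) of Theorem~\ref{thm:goodpcf}. Alternative (2) fails because each $g_c$ is a polynomial with a totally ramified fixed point at $\infty$ of local degree $d$, whereas every critical point of a flexible Latt\`{e}s map has local degree $2$, and the degree of such a map is a perfect square $\geq 4$. For alternative (3), any conjugacy between two polynomials $z^d + \gamma_1$ and $z^d + \gamma_2$ must fix the unique totally ramified fixed point $\infty$, so it takes the form $\phi(z) = \lambda z + \mu$; expanding $\phi^{-1} \circ g \circ \phi$ forces $\mu = 0$ and $\lambda^{d-1} = 1$, with $\gamma_2 = \gamma_1/\lambda$. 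Fixing $b = 0$ then restricts $c$ to at most $d-1$ values in $D(0,1)$, ruling out (3). The remaining alternative (1) yields finitely many PCF parameters in $\Dbar(0,s)$ for each $0 < s < 1$; translating via $c \mapsto c+a$ recovers the claim for $\Dbar(a,s)$. I expect the Latt\`{e}s exclusion to be the most delicate step, since it requires invoking the structural features (ramification indices and degree) of flexible Latt\`{e}s maps rather than any estimate from the family itself.
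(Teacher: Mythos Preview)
Your proposal is correct and matches the paper's proof almost exactly: translate to center the disk at $0$, verify the good-reduction hypotheses of Theorem~\ref{thm:goodpcf}, and rule out the Latt\`{e}s and isotrivial alternatives by the same structural arguments (the paper phrases the Latt\`{e}s exclusion as ``repeated critical points for $d\geq 3$, non-square degree for $d=2$,'' and treats the conjugacy case $a=0$ separately, but the content is identical). Two tiny slips do not affect your conclusions: the identity $|f_c(z)|_p=|z|_p^d$ requires $|z|_p\geq |c|_p$ rather than merely $|z|_p>1$, and when $a=0$ the map $g_0=z^d$ has \emph{two} totally ramified fixed points, so $\infty$ is not unique---but then only $c=0$ yields a conjugate, as the paper notes.
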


As we will see in Examples~\ref{ex:d2p2} and~\ref{ex:d2p3},
Theorem~\ref{thm:specialpcf} cannot be strengthened to conclude
that there are only finitely many PCF parameters in an open disk $D(a,1)$ of radius 1.
An application of Theorem~\ref{thm:specialpcf}, towards a result on the integrality of PCF
parameters in the unicritical family, will appear in \cite{BenIh2}.

The outline of the paper is as follows.
In Section~\ref{sec:back}, we set notation and recall the essential background we need,
especially on non-archimedean analysis.
In Sections~\ref{sec:triv} and~\ref{sec:lattes}, we prove technical lemmas needed to avoid
degenerations of isotrivial and Latt\`{e}s families.
Section~\ref{sec:orbit} is devoted to the statement and proof of Theorem~\ref{thm:isol},
which is our central tool, using certain $p$-adic dynamical results to study the orbit
of a single marked point in a one-parameter $p$-adic family.
In Section~\ref{sec:cor}, we use Theorem~\ref{thm:isol} and Thurston Rigidity
(stated here as Theorem~\ref{thm:thurston}) to prove 
Theorems~\ref{thm:pcfisol}, \ref{thm:goodpcf}, and~\ref{thm:specialpcf}.
Finally, in Section~\ref{sec:ex}, we present various examples showing that
the conclusions of our theorems cannot be significantly strengthened, and that the
stability conditions cannot be dropped from their hypotheses.

\section{Notation and background}
\label{sec:back}
In this section, we recall some basic facts from non-archimedean analysis,
following \cite[Chapters 2--3]{BenBook}.
Throughout the rest of the paper,
for ease of notation, we fix a prime number $p\geq 2$, and
we denote the absolute value on $\Cp$
simply as $|\cdot|$ rather than $|\cdot|_p$.

\subsection{Disks and power series}
\label{ssec:disks}
Fix $b\in\Cp$ and $R>0$. We write
\[ D(b,R) := \{x\in\Cp \, : \, |x-b|<R \} \quad\text{and}\quad
\Dbar(b,R) := \{x\in\Cp \, : \, |x-b|\leq R \} \]
for the open and closed disks, respectively, centered at $b$ of radius $R$.
 If $R\in |\Cp^{\times}|$, then 
we call $D(b,R)$ a \emph{rational open disk},
and $\Dbar(b,R)$ a \emph{rational closed disk},
which satisfy $D(b,R)\subsetneq \Dbar(b,R)$.
If $R\not\in |\Cp^{\times}|$, then we call
$D(b,R) =  \Dbar(b,R)$ an \emph{irrational disk}.
In spite of the names, all disks in $\Cp$ are open \emph{and} closed
topologically.
In addition, for any $b' \in D(b,R)$, we have $D(b',R)=D(b,R)$.
Similarly, for any $b'\in\Dbar(b,R)$, we have $\Dbar(b',R)=\Dbar(b,R)$.

We also define a disk $D$ in $\PCp=\Cp\cup\{\infty\}$ to be either a disk in $\Cp$
or the complement $\PCp\smallsetminus D'$ of a disk $D'\subseteq\Cp$.
In the latter case, if $D'$ is rational closed (resp., rational open, irrational),
we say $D$ is rational open (resp., rational closed, irrational).

Let
\[ f(z) = \sum_{n=0}^{\infty} a_n (z-b)^n \in \Cp[[z-b]] \]
be a power series with coefficients in $\Cp$.
Then $f$ converges on the open disk $D(b,R)$ if and only if the sequence
$\{|a_n| r^n \}_{n\geq 0}$ is bounded for every $0<r<R$;
equivalently, if and only if
$\lim_{n\to \infty}|a_n| r^n=0$ for every $0<r<R$.
In that case, the \emph{Weierstrass degree} of $f$ on $D(b,R)$
is the \emph{smallest} integer $m\geq 0$ such that
\begin{equation}
\label{eq:wdegdef}
|a_m|R^m = \max\big\{ |a_n| R^n \, : \, n\geq 0 \big\},
\end{equation}
or $\infty$ if the maximum is not attained.
If the Weierstrass degree of $f-a_0$ is an integer $m\geq 1$,
then $f$ maps $D(b,R)$ onto $D(a_0,S)$, where $S:=|a_m|R^m$;
moreover, every point of $D(a_0,S)$ has exactly $m$ preimages in $D(b,R)$,
counted with multiplicity.

If $R\in|\Cp^{\times}|$, then $f$ converges on the rational closed disk
$\Dbar(b,R)$ if and only if $\lim_{n\to \infty}|a_n| R^n=0$. 
In that case, the Weierstrass degree of $f$ on $\Dbar(0,R)$
is the \emph{greatest} integer $m\geq 0$ satisfying equation~\eqref{eq:wdegdef}.
As with open disks,
$f$ maps $\Dbar(b,R)$ onto $\Dbar(a_0,S)$, where $S=|a_m|R^m$
and $m$ is the Weierstrass degree of $f-a_0$ on $\Dbar(b,R)$;
moreover, every point of $\Dbar(a_0,S)$ has exactly $m$ preimages in $\Dbar(b,R)$,
counted with multiplicity.
In particular, a convergent power series $f$ can have only finitely many zeros
on a rational closed disk \emph{unless} $f$ is the trivial power series,
i.e., is identically zero. Since any open disk is a union of countably many nested
rational closed disks, then, a nontrivial convergent power series can have only
countably many zeros on an open disk.

We will also occasionally need to consider power series $F(z,w)\in\Cp[[z,w]]$
in two variables. The main fact we will need is the following. Writing
such a series as $F(x,y) = \sum_{i,j\geq 0} A_{ij} x^i y^j$, suppose that
$F$ converges for all $(x,y)$ in the bidisk $D(0,r) \times D(0,s)$, with
image contained in the disk $D(0,t)$, with $r,s,t>0$. Then we have
\[ |A_{ij}| r^i s^j \leq t \quad \text{for all } i,j\geq 0,
\,\,\quad \text{ with } |A_{00}|<t . \]

\subsection{Meromorphic functions}
\label{ssec:merom}
If $\phi\in\Cp(z)$ is a rational function of degree $d\geq 1$ with no poles in
a given disk $D\subseteq\Cp$ --- either open or closed --- then $\phi$ has a power
series expansion that converges on $D$. 
Moreover,
the Weierstrass degree of this power series is at most $d$.
(The existence of such a power series holds for the exact same reasons as over $\CC$.
One can add, subtract, and multiply convergent power series, and one can compute
reciprocals of power series with no zeros using the identity
$1/(1-z)=\sum z^n$ for $|z|<1$.)
More generally, we make the following definition.
\begin{defin}
\label{def:merom}
Let $D\subseteq\Cp$ be a disk, either open or closed.
A function $f:D\to\PCp$ is a (\emph{rigid}) \emph{meromorphic function} on $D$
if there exist power series $g,h$ converging on $D$ such that $f=g/h$.
We denote the set of all meromorphic functions on $D$ by $\MM_p(D)$.
\end{defin}
Just as for $\CC$, 
the set $\MM_p(D)$ is a field under addition and multiplication of functions,
and we may speak of zeros and poles of meromorphic functions.
We will also occasionally abuse terminology and refer to the constant function $\infty$
as being meromorphic; that is, we consider every element of the projective line
$\PP^1(\MM_p(D)) = \MM_p(D)\cup\{\infty\}$ to be a meromorphic function on $D$.
A meromorphic function on $D$ that is not identically $0$ or $\infty$ has only finitely many
zeros and poles in any proper subdisk of $D$; in particular, it has only
countably many zeros and poles in $D$, all of which are isolated.
A meromorphic function with no poles in $D$ is in fact given
by a power series converging on $D$.
If two meromorphic functions $f,g\in\MM_p(D)$ agree on an uncountable subset of $D$,
then $f=g$.

Many of the above facts (in both Sections~\ref{ssec:disks} and~\ref{ssec:merom})
are consequences of the Weierstrass Preparation Theorem
(see, for example, \cite[Theorem~14.2]{BenBook}). This result states
that a nontrivial power series $f\in\Cp[[z-b]]$ which converges on a rational closed disk
$D=\Dbar(b,R)$ can be factored uniquely as $f=gh$, where $g\in\Cp[z]$ is
a monic polynomial $g$ with all of its roots in $D$, and where $h\in\Cp[[z-b]]$
converges on $D$ with $|h(z)|$ a nonzero constant on $D$.
(Moreover, the degree of $g$ is the Weierstrass degree of $f$ on $D$.)
We also have the following fact.

\begin{lemma}
\label{lem:smallpoly}
Let $f(z)$ be a nontrivial power series converging on $D(0,S)$, let $0<s<S$,
and let $y_1,\ldots, y_m$ be the zeros of $f$ in $\Dbar(0,s)$, where $0\leq m < \infty$.
Then for any $r>0$, there exists $\eps>0$ so that
\[ |f(x)|\geq\eps \quad \text{for all } \, x\in\Dbar(0,s)\smallsetminus
\bigcup_{i=1}^m D(y_i, r). \]
\end{lemma}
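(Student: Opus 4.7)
The plan is to reduce the claim to a polynomial lower bound via the Weierstrass Preparation Theorem. Since $|\Cp^{\times}|$ is dense in $\RR_{>0}$, I would first pick some $s' \in (s,S)$ with $s' \in |\Cp^{\times}|$, so that $\Dbar(0,s')$ is a rational closed disk sitting inside the open disk of convergence $D(0,S)$. Because $f$ is nontrivial and converges on $\Dbar(0,s')$, it has only finitely many zeros there, as recorded in Section~\ref{ssec:disks}. Weierstrass Preparation therefore yields a factorization $f = g \cdot h$ on $\Dbar(0,s')$, in which $g \in \Cp[z]$ is a monic polynomial whose roots (with multiplicity) are precisely the zeros of $f$ in $\Dbar(0,s')$, and in which $h$ is a power series converging on $\Dbar(0,s')$ with $|h(z)| \equiv c$ for some constant $c > 0$.

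Next, I would partition the roots of $g$ into the $y_1,\ldots,y_m$ (the zeros in $\Dbar(0,s)$, as in the statement) and the remaining roots $z_1,\ldots,z_k$, which lie in $\Dbar(0,s') \smallsetminus \Dbar(0,s)$ and in particular satisfy $|z_j| > s$. Now fix any $x \in \Dbar(0,s) \smallsetminus \bigcup_{i=1}^m D(y_i, r)$. For each $i$ one has $|x - y_i| \geq r$ by construction, and for each $j$ the inequality $|x| \leq s < |z_j|$ together with the ultrametric equality for unequal-valuation sums gives $|x - z_j| = |z_j|$. Multiplying the factors yields
\[
|f(x)| \;=\; c\cdot|g(x)| \;=\; c\prod_{i=1}^{m}|x-y_i|\prod_{j=1}^{k}|x-z_j| \;\geq\; c\cdot r^m \prod_{j=1}^{k}|z_j|,
\]
and I would take $\eps$ to be this final expression, which is strictly positive since each $|z_j|>0$ and $c,r>0$.

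There is essentially no serious obstacle here; the argument is a routine application of Weierstrass Preparation combined with the non-archimedean triangle equality. The only minor care needed is that the radius $s$ itself may lie outside $|\Cp^{\times}|$ (in which case $\Dbar(0,s) = D(0,s)$), but the introduction of the auxiliary rational radius $s'$ sidesteps this issue cleanly, since Weierstrass Preparation is applied only on the rational closed disk $\Dbar(0,s')$.
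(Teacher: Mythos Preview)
Your proof is correct and follows essentially the same approach as the paper's: both invoke the Weierstrass Preparation Theorem after passing to a rational radius. The paper simply says to increase $s$ slightly so that $s\in|\Cp^{\times}|$ and then declares the result ``immediate,'' whereas you introduce an auxiliary $s'\in(s,S)\cap|\Cp^{\times}|$ and explicitly bound each linear factor, which has the minor advantage of cleanly handling any extra zeros $z_j$ that might appear in $\Dbar(0,s')\smallsetminus\Dbar(0,s)$ via the ultrametric equality $|x-z_j|=|z_j|$.
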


\begin{proof}
Increasing $s$ slightly if necessary, we may assume without loss that
$s\in |\Cp^{\times}|$. The conclusions are then immediate from
the Weierstrass Preparation Theorem.
\end{proof}

\subsection{Norm and distortion}
Let $h(z)=\sum_{n\geq 0} a_n z^n \in\Cp[[z]]$ be a power series converging on $D(0,R)$.
Then for any radius $0<r<R$, we define $\|h\|_{\zeta(0,r)}$ to be the
sup-norm of $h$ on $\Dbar(0,r)$. That is,
\begin{equation}
\label{eq:normzeta}
\|h\|_{\zeta(0,r)} := \sup\big\{ |h(x)| \, : \, x\in\Dbar(0,r) \big\}
= |a_d| r^d,
\end{equation}
where $d$ is the Weierstrass degree of $h$ on $\Dbar(0,r)$.
(The notation $\|\cdot\|_{\zeta(0,r)}$ is meant to evoke the fact that this norm is
a point $\zeta(0,r)$ on the Berkovich disk $D_{\textup{an}}(0,R)$ corresponding
to the disk $\Dbar(0,r)$.)
If we define a function $L_h:(-\infty,\log R)\to\RR$ by
\[ L_h(\log r) := \log \|h\|_{\zeta(0,r)}, \]
then $L_h$ is continuous and piecewise linear, with nonnegative integer slopes.
Specifically, the slope of $L_h$ just to the right of $\log r$ is the Weierstrass degree
of $h$ on $\Dbar(0,r)$. (The graph of $L_h$ is essentially the Newton copolygon of $h$.)

As in \cite[Section~4]{Ben7}, we also define a quantity $\delta(h,\zeta(0,r))$, which
we call the \emph{distortion} of the power series $h$ on the disk $\Dbar(0,r)$, by
\begin{equation}
\label{eq:distortdef}
\delta(h,\zeta(0,r)) := \log r + \log \|h'\|_{\zeta(0,r)} - \log \|h\|_{\zeta(0,r)} .
\end{equation}
Then $\log r \mapsto \delta(h,\zeta(0,r))$ is again a continuous, real-valued, piecewise
linear function on $(-\infty,\log R)$ with (possibly negative) integer slopes,
since both $h$ and $h'$ are power series converging on $D(0,R)$.
Indeed, if $h'$ and $h$ have Weierstrass degrees $\ell$ and $m$,
respectively, on $\Dbar(0,r)$, then the slope of this function just to the right of $\log r$ is
$1 + \ell - m$.

%
%

\section{Isotrivial families}
\label{sec:triv}
As noted in the introduction,
a family $f_c$ of rational maps is said to be \emph{isotrivial} if it is conjugate
to a constant map $g$. That is, $g\in\Cp(z)$ is a rational function of degree $d$,
and $f_c=h_c^{-1}\circ g\circ h_c$, where $h_c$ is a one-parameter family
of linear fractional transformations. We will need the following lemma to identify
isotrivial families when they arise in proving our main results.

\begin{lemma}
\label{lem:triv}
Let $S>0$,
let $g(z)\in\Cp(z)$ be a rational function of degree $d\geq 2$,
and let $f_c(z)\in\MM_p(D(0,S))(z)$.
Let $\beta_1,\beta_2,\beta_3\in\PP^1\big(\MM_p(D(0,S))\big)$
be three distinct meromorphic functions on $D(0,S)$,
and let $\gamma_1,\gamma_2,\gamma_3$ be three distinct points in $\PCp$.
Suppose that for every $c\in D(0,S)$,
$f_c$ is a rational function in $\Cp(z)$ of degree $d$.
Suppose also that for some uncountable subset $W\subseteq D(0,S)$
and for every $c\in W$, there is a linear fractional transformation $h_c\in\PGL(2,\Cp)$
such that
\[ f_c = h_c^{-1}\circ g \circ h_c
\quad\quad\text{and}\quad
h_c(\beta_i(c))=\gamma_i \, \text{ for }i=1,2,3.\]
Then $f_c$ is conjugate to $g$ for \emph{all} $c\in D(0,S)$.
\end{lemma}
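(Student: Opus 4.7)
The plan is to use the cross-ratio formula to extend the conjugating map $h_c$ to a meromorphic one-parameter family $H_c$ of linear fractional transformations on $D(0,S)$, to propagate the identity $f_c = H_c^{-1}\circ g\circ H_c$ to an equation in $\MM_p(D(0,S))(z)$ via the identity principle for meromorphic functions, and finally to use the degree hypothesis on $f_c$ to rule out any degeneration of $H_c$. Since a linear fractional transformation is uniquely determined by prescribing its action at three distinct points, the classical cross-ratio formula defines $H_c \in \PGL(2,\MM_p(D(0,S)))$ whose four matrix entries are polynomial expressions in the $\beta_i(c)$; after rescaling by a common meromorphic factor, these entries share no common zero on $D(0,S)$. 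Outside a countable exceptional set $E \subseteq D(0,S)$ on which some $\beta_i - \beta_j$ vanishes or becomes infinite, the three values $\beta_i(c)$ are distinct points of $\PCp$ and $H_c$ is a genuine element of $\PGL(2,\Cp)$, so by uniqueness $h_c = H_c$ for every $c \in W \setminus E$, which is still uncountable.

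Next, clearing denominators in $g \circ H_c = H_c \circ f_c$ produces a polynomial identity in $z$ whose coefficients are meromorphic in $c$, each vanishing on the uncountable set $W\setminus E$ and hence identically on $D(0,S)$. Because $\det H_c$ is invertible for every $c \in W \setminus E$, it is a nonzero element of $\MM_p(D(0,S))$, so $H_c$ is invertible in $\PGL(2,\MM_p(D(0,S)))$ and the identity rearranges to $f_c = H_c^{-1}\circ g\circ H_c$ as an equality in $\MM_p(D(0,S))(z)$. Specializing at any $c_0 \in D(0,S)\setminus E$ immediately exhibits $f_{c_0}$ as a $\PGL(2,\Cp)$-conjugate of $g$.

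The final step---and the main obstacle---is to rule out any $c_0 \in E$, where the normalized matrix $H_{c_0}$ has vanishing determinant and degenerates to a rank-one map collapsing $\PCp$ to a single point $\gamma = A(c_0)/C(c_0)$. At such $c_0$, writing $g = p/q$ and expanding the formula $f_c = (D\tilde p - B\tilde q)/(A\tilde q - C\tilde p)$ with $\tilde p(c,z) = \sum_k p_k (A(c)z+B(c))^k (C(c)z+D(c))^{d-k}$ (and $\tilde q$ analogously), the proportionality of the rows of $H_{c_0}$ forces the factor $(C(c_0)z+D(c_0))^d$ out of both numerator and denominator; a direct computation then shows that $f_{c_0}$ reduces either to the constant $-D(c_0)/C(c_0)$ when $\gamma$ is not a fixed point of $g$, or, when $\gamma$ is a fixed point and further cancellation occurs, to a rational function of still lower $z$-degree. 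Either way $\deg_z f_{c_0} < d$, contradicting the hypothesis $\deg f_{c_0} = d \geq 2$. Hence $\det H_{c_0} \neq 0$ for every $c_0 \in D(0,S)$, and specialization of the identity from the previous paragraph exhibits $f_c$ as a $\PGL(2,\Cp)$-conjugate of $g$ for every $c \in D(0,S)$.
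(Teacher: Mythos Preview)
Your overall strategy---building the meromorphic family $H_c$ from the cross-ratio, propagating the conjugacy identity by the identity principle, and then showing $H_c$ cannot degenerate---matches the paper's proof closely. The paper's Step~1 is precisely your first two paragraphs. The substantive difference lies in how you handle the possible degeneration of $H_c$ at a parameter $c_0$.

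The paper's Step~2 argues conceptually: over the local field $\LL=\Cp((c-c_0))$, both $f_c$ and $g$ have explicit good reduction (since $f_{c_0}$ has degree $d$ and $g$ has constant coefficients), and a conjugacy between two maps of good reduction must lie in $\PGL(2,\calO_\LL)$, by the uniqueness of the totally invariant Gauss point in the Berkovich projective line (\cite[Propositions~8.12--8.13]{BenBook}). This forces $\det H_{c_0}\neq 0$ in one stroke.

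Your direct computation works cleanly when $\gamma$ is \emph{not} a fixed point of $g$: then the specialized numerator and denominator are nonzero multiples of $(C(c_0)z+D(c_0))^d$, and their ratio is a constant. But there is a genuine gap in the fixed-point case. When $g(\gamma)=\gamma$, your own computation shows that both $D\tilde p-B\tilde q$ and $A\tilde q-C\tilde p$ vanish \emph{identically in $z$} at $c=c_0$, so the specialization of your formula is $0/0$ and tells you nothing about $f_{c_0}$ directly. The ``further cancellation'' you need is in the $c$-variable, not the $z$-variable: you must divide both polynomials by the correct power of $(c-c_0)$ and then argue that the resulting specialization still has $z$-degree strictly less than $d$. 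This is true, but it is not a direct computation; one clean way to see it is to lift to homogeneous coordinates, use the resultant transformation $\Res(M^{\mathrm{adj}}G(M\cdot))=(\det M)^{d(d+1)}\Res(G)$, and compare contents of the two lifts of $f_c$ to force $v_{c_0}(\det M)=0$---which is essentially the algebraic content of the paper's Berkovich argument. As written, your proof asserts this step without justification.
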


\begin{proof}
\textbf{Step 1}.
After a change of coordinates, we may assume without loss that
$\gamma_1=0$, $\gamma_2=\infty$, and $\gamma_3=1$.
We may similarly assume that none of $\beta_1,\beta_2,\beta_3$ is identically $\infty$.

Let $Y$ be the set of parameters $c\in D(0,S)$ for which at least two of
$\beta_1(c)$, $\beta_2(c)$, $\beta_3(c)$ have the same value.
Define
\begin{equation}
\label{eq:hcdef}
\tilde{h}_c(z) := \frac{\big( z-\beta_1(c)\big)\big( \beta_3(c)-\beta_2(c)\big)}
{\big( z-\beta_2(c)\big)\big( \beta_3(c)-\beta_1(c)\big)}
\in \PGL\big( 2, \MM_p(D(0,S)) \big),
\end{equation}
and let $\tilde{f}_c:=\tilde{h}_c^{-1}\circ g \circ \tilde{h}_c\in\MM_p(D(0,S))(z)$.
Then for each $c\in D(0,S)\smallsetminus Y$, we have $\tilde{h}_c\in\PGL(2,\Cp)$.
Moreover, for all $c$ in the uncountable set $W\subseteq D(0,S)\smallsetminus Y$,
we have $\tilde{h}_c=h_c$, and hence $\tilde{f}_c=f_c$ for such $c$.
Since the coefficients of both $\tilde{f}_c$ and $f_c$
are meromorphic functions on $D(0,S)$ that agree for all $c\in W$,
we in fact have $\tilde{f}_c=f_c$ as elements of $\MM_p(D(0,S))(z)$,
and hence $\tilde{f}_c=f_c$ for all $c\in D(0,S)$.
Thus, it suffices to show that $Y$ is empty.

\smallskip

\textbf{Step 2}.
Suppose, towards a contradiction, that there is some $c_0\in Y$.
Since the meromorphic functions $\beta_i-\beta_j$ are nontrivial for all $i\neq j$,
all points of $Y$ are isolated.
Translating by $c_0$ in the $c$-variable, we may assume that $c_0=0$.

Let $\LL:=\Cp((c))$, the field of formal Laurent series over $\Cp$,
which is a complete non-archimedean field with respect to the valuation $\ord_0$,
where $\ord_0(\alpha)$ denotes the order of vanishing of $\alpha\in\LL$ at $c=0$.
Let $\calO_{\LL}:=\Cp[[c]]$ be the ring of integers in $\LL$,
and let $\hatL$ be the completion of an algebraic closure of $\LL$.
Then $f_c(z)$ is a rational function in $\LL(z)$ of degree $d$,
and by hypothesis, $f_0(z)$ is a rational function in $\Cp(z)$ of the same degree $d$.
However, $f_0(z)$ is precisely the reduction of $f_c$ with respect to the
valuation $\ord_0$; thus, $f_c\in\LL(z)$ has explicit good reduction with respect to
$\ord_0$, in the sense of Definition~\ref{def:goodred}.

On the other hand, $g(z)\in\LL(z)$ also has explicit good reduction with respect to $\ord_0$
(since all its coefficients are constants in $\Cp$), and $f_c,g \in\LL(z)$ are conjugate
via $\tilde{h}_c\in\PGL(2,\LL)$. By Propositions~8.12 and~8.13 of \cite{BenBook}, then,
we must have $\tilde{h}_c\in\PGL(2,\calO_{\LL})$.
(See also \cite[Th\'{e}or\`{e}me~4]{Riv3}. In the language of Berkovich spaces,
the explicit good reduction of $f_c$ is equivalent to saying that the Gauss point $\zeta(0,1)$
in the Berkovich projective line over $\hatL$ is invariant under $f_c$,
by \cite[Proposition~8.12]{BenBook}.
However, the fact that $g=\tilde{h}_c\circ f_c \circ \tilde{h}_c^{-1}$ has explicit good reduction
says that $\tilde{h}_c^{-1}(\zeta(0,1))$ is also invariant under $f_c$; but there can only
be one such invariant point, by \cite[Proposition~8.13]{BenBook}. Thus,
$\tilde{h}_c(\zeta(0,1))=\zeta(0,1)$, which is equivalent to saying
$\tilde{h}_c\in\PGL(2,\calO_{\LL})$, by \cite[Proposition~4.10(b)]{BenBook}.)

The fact that $\tilde{h}_c\in\PGL(2,\calO_{\LL})$, and hence that
$\tilde{h}_c^{-1}\in\PGL(2,\calO_{\LL})$, implies that
$\beta_1(0)=\tilde{h}_0^{-1}(0)$, $\beta_2(0)=\tilde{h}_0^{-1}(\infty)$, and
$\beta_3(0)=\tilde{h}_0^{-1}(1)$ are all distinct.
Thus, $0\not\in Y$, as desired.
\end{proof}

Observe that the hypothesis that $\deg(f_c)=d$ for all $c\in D(0,S)$ is essential.
For example, consider the family $f_c(z)=cz^2+z$ for $c\in D(0,1)$.
Then for every $c\in D(0,1)\smallsetminus\{0\}$, we have $f_c(z)=c^{-1} g(cz)$,
where $g(z)=z^2+z$. However, $f_0(z)=z$, which is \emph{not} conjugate to $g(z)$,
but which also has strictly smaller degree.

\section{Latt\`{e}s maps}
\label{sec:lattes}

There are a number of equivalent definitions of (flexible) Latt\`{e}s maps in the
literature. Our working definition, which is equivalent to the others by
\cite[Proposition~6.51]{SilADS} and \cite[Proposition~III.1.7(a)]{SilAEC},
is as follows.

\begin{defin}
\label{def:lattes}
Let $f\in\Cp(z)$ be a rational function of degree $d\geq 2$.
We say that $f$ is a \emph{Latt\`{e}s map} if there exist
\begin{itemize}
\item an elliptic curve $E$ defined over $\Cp$,
\item a morphism $\psi:E\to E$, and
\item a finite, separable morphism $q:E\to\PP^1$
\end{itemize}
such that the following diagram commutes
\[ \begin{CD}
E  @>\psi>> E \\
@V{q}VV	 @V{q}VV \\
\PP^1 @>f>> \PP^1
\end{CD} \]
Furthermore, if
we write $E$ in Legendre form $y^2=x(x-1)(x-\lambda)$
with $\lambda\in\Cp\smallsetminus\{0,1\}$,
and if
\begin{itemize}
\item $\psi$ is given by $\psi(P):=[m]P+T$ for some integer $m\geq 2$
and some $2$-torsion point $T\in E[2]$, and
\item $q=h\circ x$ for some linear fractional transformation $h\in\PGL(2,\Cp)$,
where $x:E\to\PP^1$ is the $x$-coordinate map,
\end{itemize}
then we say $f$ is a \emph{flexible} Latt\`{e}s map.
\end{defin}

In particular, the degree $d$ of a flexible Latt\`{e}s map $f$ is precisely $d=m^2$.
For more on such maps, see \cite{Mil} or \cite[Sections~6.4--6.5]{SilADS}.

In \cite[Corollary~4.8]{Mil}, Milnor proved the following characterization
of Latt\`{e}s maps; he stated it over $\CC$, but because $\CC$ and $\Cp$ are isomorphic as
abstract fields, it also holds in our context. First, any Latt\`{e}s map is PCF,
and its strictly postcritical set consists of either three or four points.
(The strictly postcritical set is, of course,
the union of the strict forward orbits of the critical points.)
Second, a PCF map with exactly four points in its strictly postcritical set
is Latt\`{e}s if and only if all of its critical points are simple
--- that is, they map to their images with multipliclity~2 ---
and none are strictly postcritical.



In particular, flexible Latt\`{e}s maps have this postcritical structure,
with exactly four postcritical points. Specifically,
the strictly postcritical set is the four-element set $h(x(E[2]))$,
where $h$ is the linear fractional transformation of Definition~\ref{def:lattes}.
Indeed, after changing coordinates by $h$ so that the commutative diagram
of the definition becomes simply $x\circ\psi = f\circ x$, the critical points of $f$
are precisely the $2d-2$ points of $x(E[2m] \smallsetminus E[2])$,
that is, the $x$-coordinates of the $2m$-torsion points that are not $2$-torsion.
The image of $E[2m]$ under $\psi$ is $E[2]$, and hence the image of
$x(E[2m])$ under $f$ is $x(E[2])$.

To prove Theorem~\ref{thm:pcfisol},
we will need the following criterion about families of maps having
exactly four strictly postcritical points and satisfying Milnor's criterion.

\begin{lemma}
\label{lem:lattes}
Let $S>0$, let $f_c(z)\in\MM_p(D(0,S))(z)$, and let 
$\alpha_1,\ldots,\alpha_{2d-2}\in\PP^1\big(\MM_p(D(0,S))\big)$.
Suppose, for each $c\in D(0,S)$, that $f_c$ is a rational function
of degree $d\geq 2$ with critical points $\alpha_1(c),\ldots,\alpha_{2d-2}(c)$,
repeated according to multiplicity.
Suppose also that there are uncountably many
parameters $c\in D(0,S)$ for which the critical points
are all distinct, and for which the strictly postcritical set consists of exactly four points,
none of which are critical.

Then either
\begin{enumerate}
\item for all $c\in D(0,S)$, $f_c$ is is a flexible Latt\`{e}s map, or
\item for every $b,c\in D(0,S)$, $f_b$ is conjugate to $f_c$.
\end{enumerate}
\end{lemma}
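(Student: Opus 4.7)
My plan is to extract four meromorphic ``postcritical'' functions $\beta_1,\ldots,\beta_4$ on $D(0,S)$, form their cross-ratio $\lambda(c)$ as a meromorphic function, and split on whether $\lambda$ is constant. The constant case will give conclusion~(b) via Lemma~\ref{lem:triv}, while the non-constant case will give conclusion~(a) by matching the normalized family against the universal flexible Latt\`es family over the Legendre pencil.

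For the extraction step, note that for each $c$ in the uncountable hypothesized set $W$, the $2d-2$ values $f_c(\alpha_i(c))$ take exactly four distinct values. Each difference $f_c(\alpha_i(c)) - f_c(\alpha_j(c))$ is a meromorphic function on $D(0,S)$, hence either identically zero or nonzero off a countable set. A pigeonhole over partitions of $\{1,\ldots,2d-2\}$ into four blocks, combined with meromorphic continuation, produces four meromorphic functions $\beta_1,\ldots,\beta_4 \in \PP^1(\MM_p(D(0,S)))$ whose values, for $c$ outside a countable exceptional set $Z$, are precisely the four strictly postcritical points of $f_c$. A parallel pigeonhole on $f_c(\beta_j(c)) - \beta_k(c)$ yields a fixed $\sigma\colon\{1,2,3,4\}\to\{1,2,3,4\}$ with $f_c(\beta_j(c)) = \beta_{\sigma(j)}(c)$ outside $Z$, and meromorphic continuation also forces $\alpha_i \ne \beta_j$ and $\beta_j \ne \beta_k$ (for $j \ne k$) as meromorphic functions. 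I then define $\tilde{h}_c$ as in~\eqref{eq:hcdef} from $(\beta_1,\beta_2,\beta_3)$, so that $\tilde{h}_c$ sends these to $(0,\infty,1)$, and put $\tilde{f}_c := \tilde{h}_c \circ f_c \circ \tilde{h}_c^{-1}$ and $\lambda(c) := \tilde{h}_c(\beta_4(c))$. Milnor's criterion (recalled just before the lemma) shows that $\tilde{f}_c$ is a flexible Latt\`es map with postcritical set $\{0,1,\infty,\lambda(c)\}$ for $c \in W \setminus Z$.

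If $\lambda$ is a constant $\lambda_0 \in \Cp \setminus \{0,1\}$, then only finitely many rational functions of degree $d = m^2$ can serve as $\tilde{f}_c$ (they correspond to choices of $T \in E[2]$ on the single elliptic curve with Legendre parameter $\lambda_0$, compatible with $\sigma$); a pigeonhole yields a single $g^* \in \Cp(z)$ with $f_c = \tilde{h}_c^{-1} \circ g^* \circ \tilde{h}_c$ for uncountably many $c$, and Lemma~\ref{lem:triv} (applied with target $g := g^*$ and marked triple $(\beta_1,\beta_2,\beta_3) \mapsto (0,\infty,1)$) then gives conclusion~(b). If instead $\lambda$ is non-constant, I compare $\tilde{f}_c$ against the universal flexible Latt\`es family: for each $m$ with $m^2 = d$ and each $T \in E[2]$ there is an explicit $F_T(\mu,z) \in \Cp(\mu,z)$ realizing the flexible Latt\`es map on $y^2 = x(x-1)(x-\mu)$, with postcritical set $\{0,1,\infty,r_T(\mu)\}$ for a specific M\"obius $r_T$. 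A pigeonhole on the finite data of $T$ and of a branch of $r_T^{-1}\circ\lambda$ selects $(T,\mu(c))$, with $\mu$ meromorphic on $D(0,S)$ (possibly after an innocuous finite base change), such that $\tilde{f}_c(z) = F_T(\mu(c), z)$ for uncountably many $c$; meromorphic continuation then promotes this to an identity in $\MM_p(D(0,S))(z)$. Hence $\tilde{f}_c$, and thus $f_c$, is flexible Latt\`es at every $c$ where the data is non-degenerate, and an endgame argument — any exceptional $c_0$ would force either $\lambda(c_0) \in \{0,1,\infty\}$ or a collision $\beta_j(c_0) = \beta_k(c_0)$, both incompatible with $\lambda$ being non-constant together with the established meromorphic identities and the hypothesis $\deg f_{c_0} = d$ — extends this to all of $D(0,S)$, giving conclusion~(a).

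The principal difficulty is the non-constant case: isolating the single pair $(T, \text{branch of } r_T^{-1})$ that makes $\tilde{f}_c = F_T(\mu(c), z)$ hold on an uncountable set demands a careful matching of the combinatorial datum $\sigma$ with the action of $[m]+T$ on $E_\mu[2]$ modulo the Legendre symmetry group, and one must then handle the bookkeeping to guarantee that flexible Latt\`es-ness persists at every $c \in D(0,S)$ despite the formal degenerations of $\tilde{h}_c$. Once that combinatorial matching is in hand, both cases reduce to straightforward applications of meromorphic continuation together with Lemma~\ref{lem:triv}.
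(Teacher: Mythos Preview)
Your overall architecture is reasonable, and your constant/non-constant $\lambda$ split is actually a bit cleaner than the paper's rigid/flexible split (the paper first disposes of the rigid case by the countability of rigid Latt\`es conjugacy classes, then runs essentially your flexible argument).  Two comments, one minor and one serious.

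\emph{Minor.} Milnor's criterion, as quoted in the paper, only tells you that $\tilde{f}_c$ is Latt\`es, not that it is \emph{flexible} Latt\`es.  In your non-constant case this is easily patched: since there are only countably many CM $j$-invariants, for all but countably many $c$ in $W\smallsetminus Z$ the curve with Legendre parameter $\lambda(c)$ is non-CM, and those $\tilde{f}_c$ are forced to be flexible.  (Also, the $r_T$ / finite-base-change discussion looks overcomplicated: after normalizing three postcritical points to $0,1,\infty$, the fourth \emph{is} $\lambda(c)$, and the combinatorial datum $\sigma$ already pins down $T$, so one may take $\mu=\lambda$ directly.)

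\emph{The real gap.} Your ``endgame'' asserts that a collision $\beta_j(c_0)=\beta_k(c_0)$ (equivalently $\lambda(c_0)\in\{0,1,\infty\}$) is ``incompatible with \ldots\ the hypothesis $\deg f_{c_0}=d$''.  This is exactly the hard step, and you have not said why it holds.  A naive degree/resultant count does \emph{not} suffice: at such a $c_0$ both $\det\tilde{h}_c$ and the resultant of $F_T(\mu(c),\cdot)$ vanish, and a priori these zeros could cancel in the expression for $f_c$.  The paper's Step~3 supplies the missing idea: view $f_c$ as a rational function over $\LL=\Cp((c))$ (with the $c$-adic valuation $\ord_{c_0}$).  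The hypothesis $\deg f_{c_0}=d$ says precisely that $f_c$ has explicit good reduction over $\LL$, so its Berkovich Julia set over $\hat\LL$ is the single Gauss point.  But if $\ord_{c_0}(\lambda)>0$, the Legendre curve $E_c$ has multiplicative reduction over $\LL$, and the associated Latt\`es map has Berkovich Julia set equal to a nondegenerate segment --- contradicting good reduction.  Without this (or an equivalent) argument, your non-constant branch does not close, and the lemma is not proved.
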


\begin{proof}
\textbf{Step 1}. Let $W\subseteq D(0,S)$ be the uncountable set
of parameters $c$ satisfying the given restrictions on the orbits of the critical points.
By Milnor's result in \cite[Corollary~4.8]{Mil}, the map $f_c$
is Latt\`{e}s for every $c\in W$. On the other hand, there are only
countably many conjugacy classes of rigid (i.e., non-flexible) Latt\`{e}s maps in $\Cp(z)$.
Indeed, by the results of \cite[Section~6.5]{SilADS}, any such map arises
from an elliptic curve $E$ with CM (of which there are only countably many isomorphism classes),
an endomorphism of $E$ (of which there are only countably many),
and a finite subgroup of $\Aut(E)$ to quotient by (of which there are only finitely many).

Suppose there are an uncountable subset $W'\subseteq W$ and a rigid Latt\`{e}s map
$g(z)$ such that for each $c\in W'$, there exists $h_c(z)\in\PGL(2,\Cp)$ satisfying
$f_c=h_c^{-1}\circ g \circ h_c$.
Let $\gamma_1,\gamma_2,\gamma_3\in\PCp$ be three distinct postcritical points of $g$.
For each $i=1,2,3$, let $\beta_i(c):=h_c^{-1}(\gamma_i)$,
each of which must be of the form
\begin{equation}
\label{eq:betachoice}
\beta_i(c) = f_c^{m_i}\big(\alpha_{j_i}(c)\big) .
\end{equation}
In particular, $\beta_i\in\PP^1\big(\MM_p(D(0,S))\big)$ for each $i=1,2,3$.
(There may be different possible choices for $m_i$ and $j_i$ for different $c\in W'$,
but only finitely many. Thus, shrinking $W'$ if necessary, we may assume
equations~\eqref{eq:betachoice} hold for all $c\in W'$ and all $i=1,2,3$, with
$W'$ still uncountable.)
The hypotheses of Lemma~\ref{lem:triv} therefore hold, and hence
$f_c$ is conjugate to $g$ for every $c\in D(0,S)$, yielding conclusion~(b).

\smallskip

\textbf{Step 2}. We may assume for the remainder of the proof that $f_c$
is rigid Latt\`{e}s for only countably many $c$; thus, shrinking $W$
if necessary, we may assume that $f_c$ is flexible Latt\`{e}s for all $c$
in the uncountable set $W$.
For all such $c$, the strictly postcritical set of $f_c$ consists of
\emph{four} distinct points of the form
$\beta_i(c)$ as in equation~\eqref{eq:betachoice}, for $i=1,2,3,4$.
After a change of coordinates, we may assume that no $\beta_i$ is identically $\infty$.

For each $c\in W$,
let $E_c: y^2 = x(x-1)(x-\lambda(c))$ be the associated elliptic curve in Legendre form,
$T_c$ the associated $2$-torsion point,
$m\geq 2$ the associated integer,
and $h_c\in\PGL(2,\Cp)$ the associated linear fractional transformation,
as in Definition~\ref{def:lattes}.
Since $h_c\circ x(E_c[2]) = \{\beta_1(c), \beta_2(c), \beta_3(c), \beta_4(c) \}$,
and $x(E_c([2]))=\{\infty,0,1,\lambda(c)\}$,
we may assume (by reindexing if necessary) that
\begin{equation}
\label{eq:hcgamma}
h_c(\lambda(c))=\beta_1(c), \,\, h_c(0)=\beta_2(c), \,\,
h_c(1)=\beta_3(c), \,\, \text{and} \,\,\,  h_c(\infty)=\beta_4(c)
\end{equation}
for uncountably many such $c\in W$.
Shrink the uncountable set $W$ if necessary so that equation~\eqref{eq:hcgamma}
holds for \emph{all} $c\in W$. For such $c$, then, we must have
\begin{equation}
\label{eq:hcinv}
h_c^{-1}(z) := \frac{(z-\beta_2(c))(\beta_3(c) - \beta_4(c))}
{(z-\beta_4(c))(\beta_3(c) - \beta_2(c))},
\end{equation}
and hence
\begin{equation}
\label{eq:lambda}
\lambda(c) = \frac{(\beta_1(c)-\beta_2(c))(\beta_3(c) - \beta_4(c))}
{(\beta_1(c)-\beta_4(c))(\beta_3(c) - \beta_2(c))} \in \MM_p(D(0,S)).
\end{equation}
Note that $\lambda$ cannot be identically equal to any of $0,1,\infty$,
since $E_c$ is an elliptic curve for all $c\in W$.

\smallskip

\textbf{Step 3}.
Let $Y\subseteq D(0,S)\smallsetminus W$ be the set of parameters $c\in D(0,S)$
for which at least two of
the values $\beta_1(c), \beta_2(c), \beta_3(c), \beta_4(c)$ coincide.
Since the meromorphic functions $\beta_i-\beta_j$ are nontrivial for all $i\neq j$,
all points of $Y$ are isolated.
We claim that $Y$ is empty.

Suppose, towards a contradiction, that there is some $c_0\in Y$. Translating in
the $c$-variable, we may assume $c_0=0$.
Reindexing $\beta_1,\ldots,\beta_4$ again if necessary,
with the associated changes to $E_c$, $T_c$, and $h_c$,
we may assume that $\lambda(0)\neq 1,\infty$.
Thus, $\ord_0(\lambda) \geq 0 = \ord_0(\lambda-1)$, where $\ord_0$
denotes the order of vanishing at $c=0$.
To prove our claim, it suffices to show that
$\lambda(0)\neq 0$, i.e., that $\ord_0(\lambda)=0$.

As in the proof of Lemma~\ref{lem:triv},
let $\LL:=\Cp((c))$ with valuation $\ord_0$,
and with ring of integers $\calO_{\LL}:=\Cp[[c]]$.
Also as in that proof, the map $f_c\in\LL(z)$ has explicit good reduction
with respect to $\ord_0$.

If $\ord_0(\lambda)>0$, then $E_0$ is a singular curve. Thus, if we consider
$E_c$ as an elliptic curve over the discretely valued field $\LL$, then
$E_c$ has multiplicative reduction. (See, for example, \cite[Proposition~VII.5.1(b)]{SilAEC}.)
In light of Exercise~10.19(a) and Proposition~8.12 of \cite{BenBook}, then,
the map $f_c$ does \emph{not} have explicit good reduction.
(More precisely, the Julia set of $f_c$ in the Berkovich projective line over $\hatL$,
the completion of an algebraic closure of $\LL$,
is a line segment, containing infinitely many points. However, by
\cite[Proposition~8.12]{BenBook}, the Julia set of a map of explicit good reduction
consists of only one point.)
This contradiction to the previous paragraph implies that $\ord_0(\lambda)=0$,
proving our claim.

\smallskip

\textbf{Step 4}. 
By the claim of Step~3, the points $\beta_1(c),\beta_2(c),\beta_3(c),\beta_4(c)$
are distinct for each $c\in D(0,S)$. Hence, for each $c\in D(0,S)$,
the map $h_c$ described by equations~\eqref{eq:hcgamma} and~\eqref{eq:hcinv}
lies in $\PGL(2,\Cp)$, and $\lambda(c)\neq 0,1,\infty$, where
$\lambda$ is the meromorphic function of equation~\eqref{eq:lambda}.

In addition, for each $c\in W$, the $2$-torsion point $T_c$ of Step~2 must 
be one of the four 2-torsion points of $E_c$, namely $(\lambda(c),0)$,
$(0,0)$, $(1,0)$, or $O$, the point at infinity.
Shrinking the uncountable set $W$ if necessary, we may assume that $T_c$
is always the same one of these four points, for every $c\in W$.
For \emph{every} $c\in D(0,S)$, define $\tilde{T}_c$ to be this torsion point on $E_c$.

For every $c\in D(0,S)$, define $\tilde{f}_c$ to be the flexible Latt\`{e}s map associated
to the curve $E_c: y^2=x(x-1)(x-\lambda(c))$, morphism $\psi:P\mapsto [m]P+\tilde{T}_c$,
and linear fractional transformation $h_c\in\PGL(2,\Cp)$.
Then $f_c, \tilde{f}_c\in \MM_p(D(0,S))(z)$, and $f_c=\tilde{f}_c$ for all $c\in W$.
Since $W$ is uncountable, we have $f_c=\tilde{f}_c$ for all $c\in D(0,S)$.
\end{proof}

\section{The orbit of a marked point}
\label{sec:orbit}
The heart of our proof of Theorem~\ref{thm:pcfisol} is the following
result on the orbit of a single marked point in a family $f_c(z)$.

\begin{thm}
\label{thm:isol}
Let $S>0$,
let $f_c(z):=\Phi(c,z)\in\MM_p(D(0,S))(z)$,
and let $\alpha\in\PP^1\big(\MM_p(D(0,S))\big)$.
Suppose that for each $c\in D(0,S)$,
$f_c(z)$ is a rational function in $\Cp(z)$ of degree $d\geq 2$.
Let $N>M\geq 0$ be integers, and
let $U_0,\ldots,U_{N}\subseteq \PCp$ be rational open disks such that
for all $c\in D(0,S)$,
\begin{itemize}
\item $\alpha(c)\in U_0$,
\item $f_c(U_j) \subseteq U_{j+1}$ for all $j=0,\ldots, N-1$, and
\item $U_N\subseteq U_M$.
\end{itemize}
Then either
\begin{enumerate}
\item for any $s\in (0,S)$, there are are only finitely many
$c\in\Dbar(0,s)$ such that $\alpha(c)$
and all critical points of $f_c$ in
$U_{M}\cup \cdots \cup U_{N-1}$
are preperiodic under $f_c$,
or
\item there are integers $n>m\geq 0$ such that
$f_c^n(\alpha(c))=f_c^m(\alpha(c))$ for all $c\in D(0,S)$.
\end{enumerate}
\end{thm}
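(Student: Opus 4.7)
The plan is to reduce the theorem to a statement about the return map $g_c := f_c^{N-M}$ on the disk $U_M$. By the hypothesis $U_N \subseteq U_M$, we have $g_c(U_M) \subseteq U_M$ for every $c \in D(0,S)$, so after a constant-in-$c$ M\"obius change of coordinates I may assume $U_M = D(b,R) \subseteq \Cp$, and then $g_c|_{U_M}$ is given by a power series converging on $U_M$ with coefficients meromorphic in $c$. Set $\beta(c) := f_c^M(\alpha(c))$ and $\beta_k(c) := g_c^k(\beta(c)) = f_c^{M + k(N-M)}(\alpha(c))$; each $\beta_k \in \MM_p(D(0,S))$ takes values in $U_M$. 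The point $\alpha(c)$ is preperiodic under $f_c$ if and only if $\beta(c)$ is preperiodic under $g_c$.

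The next step is a dichotomy on whether $\beta_k = \beta_j$ in $\MM_p(D(0,S))$ for some pair $k > j \geq 0$. If so, conclusion (b) holds with $n := M + k(N-M)$ and $m := M + j(N-M)$. Otherwise each $\psi_{k,j} := \beta_k - \beta_j$ is a nontrivial meromorphic function on $D(0,S)$, so by the Weierstrass Preparation Theorem (cf.~Lemma~\ref{lem:smallpoly}) it has only finitely many zeros in $\Dbar(0,s)$. Conclusion (a) therefore follows once one shows that among $c \in \Dbar(0,s)$ at which the critical points of $f_c$ in the tail disks $U_M, \ldots, U_{N-1}$ are preperiodic under $f_c$, only finitely many pairs $(k,j)$ can actually arise.

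The crux is thus to bound, uniformly in such $c \in \Dbar(0,s)$, the preperiod and period of $\beta(c)$ under $g_c$. The critical-point assumption is essential: by the chain rule applied to $g_c = f_c^{N-M}$, the critical points of $g_c|_{U_M}$ are exactly the points of $U_M$ mapped by some iterate $f_c^j$ (with $0 \leq j < N-M$) to a critical point of $f_c$ lying in $U_{M+j}$, so the hypothesis makes $g_c|_{U_M}$ postcritically finite on $U_M$. When $g_c$ has Weierstrass degree one on $U_M$, the map is an injective contraction with a unique attracting fixed point $\gamma(c) \in U_M$ (meromorphic in $c$), and preperiodicity of $\beta(c)$ collapses to the single equation $\beta(c) = \gamma(c)$. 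In the general degree-$\geq 2$ case I would apply the distortion formalism of \cite[Section~4]{Ben7}, tracking $\delta(g_c^k, \zeta)$ along the orbit of $\beta(c)$, exploiting the finiteness of the postcritical set to preclude wandering disks and to force the Weierstrass degrees of $g_c^k$ near orbit points to stabilize, thereby bounding the closing-up time of the orbit by a constant depending only on $d$, $N-M$, $R$, and $s$. This uniform orbit-closure bound is the hardest step; once obtained, only finitely many pairs $(k,j)$ remain to contribute zeros, and conclusion (a) follows.
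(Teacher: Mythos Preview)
Your reduction to the return map $g_c:=f_c^{N-M}$ on $U_M$ and the meromorphic functions $\psi_{k,j}=\beta_k-\beta_j$ is fine, and it matches the paper's setup.  The gap is in the ``crux'' paragraph: your plan to bound the closing-up time $(k,j)$ uniformly in $c$ cannot succeed, and the degree-one case already shows why.

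When $g_c$ has Weierstrass degree~$1$ on $U_M=D(0,R)$, it is injective, but it need \emph{not} be a contraction: writing $g_c(z)=a_0+a_1z+\cdots$, the containment $g_c(U_M)\subseteq U_M$ only gives $|a_1|\leq 1$.  If $|a_1|=1$ the map is an isometry of $U_M$ (the indifferent case), there is no attracting fixed point, and preperiodicity does not reduce to a single equation $\beta(c)=\gamma(c)$.  Worse, the minimal period of a preperiodic $\beta(c)$ under such an isometry is genuinely unbounded as $c$ varies: Example~\ref{ex:d2p3} (the $3$-adic quadratic family on $D(-1,1)$) exhibits parameters where the critical orbit enters a cycle of period $3^n$ for every $n\geq 0$.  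So no constant depending only on $d$, $N-M$, $R$, $s$ can bound which pairs $(k,j)$ occur, and the distortion argument you sketch for degree $\geq 2$ cannot close this hole either.

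The paper's proof avoids bounding $(k,j)$ altogether.  After translating the marked point to the origin it splits on $|A_{0,1}|$.  In the indifferent case $|A_{0,1}|=1$ it uses Rivera-Letelier's \emph{iterative logarithm} $\Lambda(c,z):=\lim_{n\to\infty} p^{-n}(\Psi_c^{p^n}(z)-z)$, a single two-variable power series whose zero locus captures \emph{all} periodic points at once; then $F(c):=\Lambda(c,0)$ is one analytic function on $\Dbar(0,\tilde s)$ whose zeros are exactly the preperiodic parameters.  In the attracting case $|A_{0,1}|<1$ there is an analytic attracting fixed point $\beta(c)$, and a lemma in the spirit of \cite{BIJL} (Lemma~\ref{lem:bijlvar}) shows that for $c$ outside a finite exceptional set either the orbit of $0$ lands on $\beta(c)$ after a fixed number of steps, or $0$ wanders, or some critical point in $U_M$ wanders.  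Both cases therefore produce a \emph{single} analytic equation controlling all preperiodic parameters, which is the idea your proposal is missing.
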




To prove Theorem~\ref{thm:isol}, we will need the following lemma
about dynamics in a disk-shaped attracting basin.
It is a variant of \cite[Theorem~4.1]{BIJL}.

\begin{lemma}
\label{lem:bijlvar}
Let $R>0$, let $\gamma\in D(0,R)$, and let
\[h(z)=\sum_{i\geq \ell} A_i z^i \in\Cp[[z]]
\quad \text{with } \ell\geq 1 \text{ and } A_\ell \neq 0\]
be a nontrivial power series fixing the point $0$,
and converging on $D(0,R)$ with Weierstrass degree $d\geq 1$.
Suppose that $h(D(0,R))\subseteq D(0,R)$, and that
\begin{equation}
\label{eq:cohyp}
|A_\ell| R^{\ell -1} < \min\big\{ |m|^m \, : \, 1 \leq m \leq d \big\}.
\end{equation}
Then at least one of the following three conclusions holds:
\begin{enumerate}
\item $h(\gamma)=0$.
\item $\gamma$ has infinite forward orbit under $h$.
\item $h$ has a critical point in $D(0,R)$
  that has infinite forward orbit under $h$.
\end{enumerate}
\end{lemma}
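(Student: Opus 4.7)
The plan is to argue by contradiction: I assume that $h(\gamma)\neq 0$, that $\gamma$ has finite forward orbit under $h$, and that every critical point of $h$ in $D(0,R)$ has finite forward orbit, and I aim to derive a contradiction.

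My first step is to use the hypothesis \eqref{eq:cohyp} to establish that iterates of $h$ on $D(0,R)$ contract onto $0$. I analyze the distortion function $\delta(h,\zeta(0,r))$ from \eqref{eq:distortdef} via the slope formula $1+(\text{Wdeg of }h')-(\text{Wdeg of }h)$ for $\log r\mapsto\delta(h,\zeta(0,r))$: the bound $|A_\ell|R^{\ell-1}<|m|^m$ for every $1\leq m\leq d$ is exactly what is needed to keep $\delta(h,\zeta(0,r))<0$ throughout the interval $(0,R)$, which is equivalent to $r\|h'\|_{\zeta(0,r)}<\|h\|_{\zeta(0,r)}$. Combined with the forward invariance $h(D(0,R))\subseteq D(0,R)$, this forces every orbit in $D(0,R)$ to converge to $0$, and in particular $0$ to be the unique periodic point of $h$ in $D(0,R)$.

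Applying this dynamical picture to $\gamma$: since $\gamma$'s finite forward orbit must converge to $0$, there is a minimal $n\geq 1$ with $h^n(\gamma)=0$. The assumption $h(\gamma)\neq 0$ forces $n\geq 2$, so $\beta:=h^{n-1}(\gamma)$ is a \emph{nonzero} root of $h$ in $D(0,R)$. By the Weierstrass factorization $h(z)=z^\ell(A_\ell+A_{\ell+1}z+\cdots)$, any such $\beta$ is bounded away from $0$, lying in an annulus whose inner radius is explicitly determined by the distortion data of the previous step.

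My final and most delicate step is to derive a contradiction from the coexistence of such a $\beta$ with the assumption that every critical point of $h$ in $D(0,R)$ has finite forward orbit. Following the blueprint of the BIJL \cite{BIJL} Theorem~4.1 argument that the present lemma varies, I would iteratively pull back the orbit $\gamma\to h(\gamma)\to\cdots\to\beta$ along branches of $h^{-1}$, at each pullback either encountering an already-listed post-critical point or producing a new critical point of $h$ in $D(0,R)$. The strict inequality in \eqref{eq:cohyp} keeps the Weierstrass-degree and annular-radius bookkeeping tight enough that either the pullback continues indefinitely --- producing more than the $d-1$ critical points of $h$ available in $D(0,R)$ (counted with multiplicity) --- or one of the critical points encountered has infinite forward orbit, yielding the desired contradiction. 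The main obstacle is precisely this pullback/counting argument: adapting BIJL's proof to work under \eqref{eq:cohyp} and to deliver the sharper conclusion ``$h(\gamma)=0$ in one step'' rather than merely ``$h^n(\gamma)=0$ for some $n$'' is the technical crux.
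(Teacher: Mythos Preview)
Your first step contains a concrete error. The claim that \eqref{eq:cohyp} forces $\delta(h,\zeta(0,r))<0$ throughout $(0,R)$ is false: take $p=5$, $h(z)=z^2$, any $R<1$; then $\delta(h,\zeta(0,r))=\log|2|=0$ for every $r$. More to the point, even where $\delta<0$ holds, the inequality $r\|h'\|_{\zeta(0,r)}<\|h\|_{\zeta(0,r)}$ compares the derivative to $h$, not $h$ to the identity, so it does not by itself give $|h(x)|<|x|$. Your intended conclusion---that $0$ is the unique periodic point of $h$ in $D(0,R)$---is nevertheless correct, but for a simpler reason: the forward invariance $h(D(0,R))\subseteq D(0,R)$ gives $|A_i|R^{i-1}\leq 1$ for every $i$, and together with $|A_\ell|R^{\ell-1}<1$ this yields $\|h\|_{\zeta(0,r)}<r$ for all $r\in(0,R)$, hence $|h(x)|<|x|$ for $x\neq 0$. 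Notice that only the crude consequence $|A_\ell|R^{\ell-1}<1$ of \eqref{eq:cohyp} has been used; the sharper bound $<|m|^m$ has not yet entered.

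The real gap is your final step, which you yourself flag as the crux but leave as a sketch. The paper does \emph{not} proceed by iteratively pulling back the orbit of $\gamma$. Instead it gives a one-shot Newton-polygon argument: with $r$ the smallest absolute value of a nonzero root of $h$, one may assume $|h(\gamma)|\geq r$ and that $h'$ has no roots in $D(0,r)\smallsetminus\{0\}$ (else done immediately). Choosing $S\in[r,|\gamma|]$ with $h(\Dbar(0,S))=\Dbar(0,r)$ and letting $m$ be the Weierstrass degree of $h$ on $\Dbar(0,S)$, the paper studies the piecewise-linear function
\[
L(\log t)=m\,\delta(h,\zeta(0,t))+\log\|h\|_{\zeta(0,t)}.
\]
The full strength of \eqref{eq:cohyp} is used precisely here, together with the lower bound $\delta(h,\zeta(0,S))\geq\log|m|$ from \cite{Ben7,BIJL}, to show $L(\log S)>L(\log r)$. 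This forces a radius $\rho\in(r,S)$ at which $L$ has positive slope, and a short root-count comparing $h$ and $h'$ on $\Dbar(0,\rho)$ then exhibits a critical point whose image under $h$ is a \emph{nonzero} element of $D(0,r)$, hence wandering. Your pullback outline neither matches this mechanism nor indicates where the constants $|m|^m$ would enter; as written, the proposal does not close the argument.
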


\begin{proof}
%
Let $r>0$ be the smallest radius such that $h$ has a zero
in the punctured disk $\Dbar(0,r)\smallsetminus\{0\}$, or $r=R$
if $h$ has no zeros in $D(0,R)\smallsetminus\{0\}$.
Then
\[ |A_i| r^i \leq |A_{\ell}| r^{\ell}
\quad \text{for all } i\geq \ell, \]
as otherwise $h$ would have a root in $D(0,r)\smallsetminus\{0\}$.
Therefore,
\begin{equation}
\label{eq:hxsmall}
|h(x)| = |A_{\ell}| \, |x|^{\ell} \quad \text{for all } x\in D(0,r).
\end{equation}
Combined with condition~\eqref{eq:cohyp},
it follows that if $0<|x|<r$, then $0<|h(x)|<|x|$.
In particular, the only preperiodic point of $h$ in $D(0,r)$
is the fixed point $x=0$.

If $h(\gamma)\in D(0,r)$, then either conclusion~(a) holds,
or else $h(\gamma)$ is a nonzero point in $D(0,r)$ and hence
is not preperiodic, yielding conclusion~(b).
Similarly, if $h'$ has any zeros in $D(0,r)\smallsetminus\{0\}$,
then $h$ has a wandering critical point, yielding conclusion~(c).
Hence, we assume for the remainder of the proof that $|h(\gamma)|\geq r$,
and that $h'$ has no roots in $D(0,r)\smallsetminus\{0\}$.
These two assumptions imply that
\[ h\big(\Dbar(0,|\gamma|)\big)\supseteq \Dbar(0,r) \]
(because $h(\Dbar(0,|\gamma|)$ is a disk containing both $h(\gamma)$ and $0$), and
\begin{equation}
\label{eq:hpsize}
|h'(x)|= |\ell A_{\ell}| \, |x|^{\ell -1} \quad \text{for all } x\in D(0,r).
\end{equation}

Since the image of any disk $\Dbar(0,t)$ is a disk $\Dbar(0,u)$,
where $u$ is a continuous and increasing function of $t$, there is
a unique radius $S\in [r,|\gamma|]$ such that $h$
maps $\Dbar(0,S)$ onto $\Dbar(0,r)$.
Let $m$ be the Weierstrass degree of $h$ on $\Dbar(0,S)$.
Because $0$ has $\ell$ preimages at $0$ and at least one other
of absolute value $r$, we have $m\geq \ell + 1$.
Moreover, since $\Dbar(0,S)\subseteq D(0,R)$, we also have $m\leq d$.

Define a function
$L:(-\infty,\log R) \to \RR$ by
\begin{align*}
L(\log t) &= m \log t + m \log \|h'\|_{\zeta(0,t)}
- (m-1) \log \|h\|_{\zeta(0,t)}
\\
&= m\delta(h,\zeta(0,t)) + \log \|h\|_{\zeta(0,t)},
\end{align*}
where the norm $\|\cdot\|_{\zeta(0,t)}$ and distortion $\delta$
are as defined in equations~\eqref{eq:normzeta} and~\eqref{eq:distortdef}.
Then $L$ is a continuous, piecewise linear function
(with all slopes integers).
Equations~\eqref{eq:hxsmall} and~\eqref{eq:hpsize} together imply that
\begin{align*}
L(\log r) & = m\log r +
m \log \big( \big|\ell A_\ell \big| r^{\ell - 1} \big)
- (m-1) \log \big( \big| A_{\ell} \big| r^{\ell} \big)
\\
& = \log \big(\big|A_{\ell}\big| r^{\ell} \big) + m\log |\ell|.
\end{align*}
On the other hand, the distortion $\delta$
satisfies $\delta(h,\zeta(0,S))\geq |m|$,
for example by \cite[Lemma~4.2]{Ben7} or \cite[Lemma~3.3]{BIJL}.
Since $h(\zeta(0,S))=\zeta(0,r)$,
then
\begin{align*}
L(\log S) &= m\delta(h,\zeta(0,S)) + \log\|h\|_{\zeta(0,S)}
\geq m\log|m| + \log r
\\
& > \log\big( |A_{\ell}| R^{\ell -1} \big) + \log r
\geq \log\big( |A_{\ell}| r^{\ell} \big)
\\
& \geq 
\log \big(\big|A_{\ell}\big| r^{\ell} \big) + m\log |\ell|
= L(\log r),
\end{align*}
where the second inequality is by hypothesis~\eqref{eq:cohyp}.
Thus, there must be some $\rho\in (r,S)$
such that $L$ has positive slope at $\log\rho$. Define
\begin{align*}
M &= \text{number of roots of } h' \text{ in } \Dbar(0,\rho),
\text{ counted with multiplicity},
\\
a &= \text{number of roots $y$ of } h' \text{ in } \Dbar(0,\rho)
\text{ with } h(y)\neq 0, \text{ counted with multiplicity, and}
\\
b &= \text{number of distinct roots of } h \text{ in } \Dbar(0,\rho).
\end{align*}
Then the slope of $L$ at $\log\rho$ is $m(M+1)-(m-1)m$,
because $\log\|F\|_{\zeta(0,t)}$ increases with respect to $\log t$
with slope equal to the number of zeros of $F$ in $\Dbar(0,t)$,
and because $h$ has $m$ roots in $\Dbar(0,\rho)$.
In addition, since a root of $h$ of multiplicity $n\geq 2$
is also a root of $h'$ of multiplicity $n-1$, we have
$M-a = m-b$. Furthermore, we have $b\geq 2$ by our choice of $r$,
because $h$ has at least two distinct roots in $\Dbar(0,\rho)$, at $0$ and at
some $x$ with $|x|=r$. Thus, since the slope 
of $L$ at $\log\rho$ is positive, we have
\[ 0 < m(M+1) - (m-1)m = m(m+a-b+1) - m^2 + m
=m(a -b + 2) \leq ma, \]
and hence $a>0$. That is, there is some critical point $\beta$
of $h$ in $\Dbar(0,\rho)$ with $h(\beta)\neq 0$.
Thus, $h(\beta)\in D(0,r)\smallsetminus\{0\}$,
whence $\beta$ is wandering.
\end{proof}

\begin{proof}[Proof of Theorem~\ref{thm:isol}]
After a change of coordinates, we may assume
that the open disk $U_M$ is $U_M=D(0,R)$ for some $R>0$.
Thus, $f_c^M(\alpha(c))\in U_M=D(0,R)$ for every $c\in D(0,S)$,
and $f_c^{N-M}(z)\in U_N\subseteq D(0,R)$ for every $(c,z)\in D(0,S)\times D(0,R)$.

Define $\Psi:D(0,S)\times D(0,R) \to D(0,R)$ by
\[ \Psi_c(z) = \Psi(c,z) := f_c^{N-M}\Big(z + f_c^M\big(\alpha(c)\big)\Big)
- f_c^M\big(\alpha(c)\big) \in \Cp[[c,z]]. \]
Expanded as a power series, we have
\[\Psi_c(z) = \Psi(c,z) = \sum_{i,j\geq 0} A_{i,j} c^i z^j\]
with $A_{i,j}\in\Cp$ satisfying
\begin{equation}
\label{eq:ASRbound}
|A_{i,j}|S^i R^j\leq R \quad \text{for all } i,j\geq 0,
\end{equation}
and with $|A_{0,0}|<R$.

For $i=0$ and $j=1$, we have $|A_{0,1}|\leq 1$.
We consider two cases: that $|A_{0,1}| <1$, and that $|A_{0,1}| =1$.

\smallskip
\smallskip

\textbf{Case 1: Attracting}.
Suppose that $|A_{0,1}| < 1$. 
We will show that there is a power series $\beta(c)\in\Cp[[c]]$
such that $\beta(c)$ is an attracting fixed point of $\Psi_c$ for all $c\in D(0,S)$.
(That is, $\Psi_c(\beta(c))=\beta(c)$, and $|\Psi'_c(\beta(c))|<1$.)
Then, for each $s\in (0,S)$, we will construct a finite set
$\calE_s\subseteq\Dbar(0,s)$ such that either
\begin{itemize}
\item there exists $n\geq 1$ such that for all $c\in\Dbar(0,s)$, we have $\Psi_c^n(0)=\beta(c)$, or
\item for all $c\in \Dbar(0,s)\smallsetminus\calE_s$, either $0$ or a critical point of $\Psi_c$ in $D(0,R)$
has infinite forward orbit under $\Psi_c$,
\end{itemize}
from which the theorem will follow.

\smallskip
\smallskip

\textbf{Step 1}.
For any radius $s\in [0,S)$, define
\begin{equation}
\label{eq:tcr}
t_{s}:=
\max \bigg\{ \frac{s}{S}, \frac{|A_{0,0}|}{R}, |A_{0,1}| \bigg\} < 1.
\end{equation}

\begin{claim}
\label{cl:attrclaim}
For all $c\in\Dbar(0,s)$
and all $x,y\in\Dbar(0,t_s R)$, we have
\[ \Psi_c(x)\in\Dbar(0,t_{s}R) \quad\text{and}\quad
\big| \Psi_c(x) - \Psi_c(y) \big| \leq t_{s} |x-y|. \]
\end{claim}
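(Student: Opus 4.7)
The plan is to prove Claim~\ref{cl:attrclaim} by direct estimation of the two-variable power series $\Psi(c,z) = \sum_{i,j \geq 0} A_{i,j} c^i z^j$, using the coefficient bound \eqref{eq:ASRbound} together with the ultrametric inequality. The definition \eqref{eq:tcr} of $t_s$ has been engineered precisely so that every coefficient of $\Psi$ contributes at most $R t_s$ under the relevant substitution; the claim is then a matter of confirming this term by term.

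For the containment $\Psi_c(x) \in \Dbar(0, t_s R)$, I would fix $c \in \Dbar(0,s)$ and $x \in \Dbar(0,t_s R)$ and apply the ultrametric inequality to estimate $|\Psi_c(x)| \leq \max_{i,j \geq 0} |A_{i,j}| \, |c|^i \, |x|^j$. The $(i,j)=(0,0)$ term satisfies $|A_{0,0}| \leq t_s R$ directly from \eqref{eq:tcr}. For $i+j \geq 1$, I would rewrite
\[ |A_{i,j}| \, |c|^i \, |x|^j \leq \bigl( |A_{i,j}| S^i R^j \bigr) \cdot (|c|/S)^i \cdot t_s^j \leq R \cdot t_s^i \cdot t_s^j \leq R t_s, \]
using \eqref{eq:ASRbound} and $|c|/S \leq s/S \leq t_s$.

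For the Lipschitz estimate, I would use the standard factorization
\[ \Psi_c(x) - \Psi_c(y) = (x-y) \sum_{i \geq 0, \, j \geq 1} A_{i,j} \, c^i \sum_{k=0}^{j-1} x^k y^{j-1-k}. \]
The inner geometric-sum factor has norm at most $(t_s R)^{j-1}$, so it suffices to bound $\max_{i \geq 0, j \geq 1} |A_{i,j}| \, s^i \, (t_s R)^{j-1}$ by $t_s$. The $(i,j)=(0,1)$ term gives $|A_{0,1}| \leq t_s$ directly from \eqref{eq:tcr}. For every other $(i,j)$ with $j \geq 1$, I would rewrite
\[ |A_{i,j}| \, s^i \, (t_s R)^{j-1} = \bigl( |A_{i,j}| S^i R^j \bigr) R^{-1} (s/S)^i \, t_s^{j-1} \leq t_s^i \, t_s^{j-1} = t_s^{i+j-1} \leq t_s, \]
since $i + j - 1 \geq 1$ in all remaining cases. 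Combining the two bounds gives $|\Psi_c(x) - \Psi_c(y)| \leq t_s |x-y|$, as required.

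The main obstacle is essentially bookkeeping rather than any genuine difficulty: the claim is a formal consequence of \eqref{eq:ASRbound} and the ultrametric inequality once one has chosen $t_s$ correctly. What makes the argument work — and what pins us to Case~1 — is the assumption $|A_{0,1}| < 1$, which is exactly what allows $t_s$ to be chosen strictly less than $1$ while still dominating the offending $(0,1)$ coefficient; without it, the linear term would block the desired contraction.
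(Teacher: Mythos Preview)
Your argument is correct and essentially identical to the paper's own proof: both estimate each term $A_{i,j} c^i x^j$ via \eqref{eq:ASRbound} and the definition of $t_s$, handling the $(0,0)$ and $(0,1)$ terms separately, and both obtain the Lipschitz bound from the same factorization $x^j - y^j = (x-y)\sum_{k=0}^{j-1} x^k y^{j-1-k}$.
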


To prove Claim~\ref{cl:attrclaim}, observe that for any
such $c,x,y$, and for any $i,j\geq 0$, we have
\[ \big|A_{i,j} c^i x^j\big| \leq 
\bigg( \frac{s}{S} \bigg)^i t_s^j \big( |A_{i,j}| S^i R^j \big) \leq t_{s}^{i+j} R, \]
using inequality~\eqref{eq:ASRbound}.
Thus, $|A_{i,j} c^i x^j| \leq t_s R$ if $i+j\geq1$;
moreover, $|A_{0,0}|\leq t_{s}R$ by definition of $t_{s}$.
The first conclusion of the claim is then immediate.
For the second,
\begin{align*}
\big| \Psi_c(x) - \Psi_c(y) \big| &=
\bigg| \sum_{i,j\geq 0} A_{i,j} c^i (x^j - y^j)\bigg|
\\
& \leq |x-y| \max_{i\geq 0, j\geq 1}
\big\{ \big|A_{i,j} c^i (x^{j-1} + x^{j-2}y + \cdots + y^{j-1})\big| \big\}
\\
& \leq |x-y| \max_{i\geq 0, j\geq 1}
\bigg\{ \bigg(\frac{s}{S}\bigg)^i t_s^{j-1}
\big( |A_{i,j}| S^i R^{j-1}\big) \bigg\}
\leq t_{s}|x-y| ,
\end{align*}
where the final inequality is because for $i+j\geq 2$, 
inequality~\eqref{eq:ASRbound} yields
\[ \bigg(\frac{s}{S}\bigg)^i t_s^{j-1} \big( |A_{i,j}| S^i R^{j-1}\big) 
\leq t_{s}^{i+j-1} \leq t_{s}, \]
and because $|A_{0,1}|\leq t_{s}$ for $i=0$ and $j=1$.
Thus, we have proven Claim~\ref{cl:attrclaim}.

\smallskip
\smallskip

\textbf{Step 2}.
To prove the existence of the fixed point $\beta(c)$,
for each $n\geq 0$, define
\[ g_n(c) \in \Cp[[c]]
\quad \text{by} \quad g_n(c) := \Psi_c^n(0). \]
Then each $g_n$ is a convergent power series on $D(0,S)$, with image in $D(0,R)$.
Moreover, for each $c\in D(0,S)$, by repeated application of Claim~\ref{cl:attrclaim},
we see that $\{g_n(c)\}_{n\geq 0}$ is a Cauchy sequence
of points in $D(0,R)$, and hence that it converges to some point
$\beta(c)$; furthermore, by Claim~\ref{cl:attrclaim} again, this convergence is uniform
in $c\in\Dbar(0,s)$. Because each $g_n$ is a power series
converging on $D(0,S)$ with image in $D(0,R)$, the limit function
$\beta(c)$ is also such a power series.
By construction, we have $\Psi_c(\beta(c))=\beta(c)$, as desired.

By Claim~\ref{cl:attrclaim} yet again, for any $s\in (0,S)$ and any $c\in\Dbar(0,s)$, we have
\begin{equation}
\label{eq:tcrmult}
\beta(c)\in\Dbar(0,t_s R), \quad\text{and}\quad
|\Psi'_c\big(\beta(c)\big)| \leq t_{s} < 1 .
\end{equation}
In particular, since its multiplier is $\Psi'_c\big(\beta(c)\big)$,
we see that $\beta(c)$ is an \emph{attracting} fixed point  of $\Psi_c$.

\smallskip
\smallskip

\textbf{Step 3}.
Changing coordinates to move $\beta(c)$ to the origin, define
\[ H_c(z) = H(c,z) := \Psi\big(c, z + \beta(c)\big) - \beta(c) \in \Cp[[c,z]], \]
which converges on $D(0,S)\times D(0,R)$ with
image in $D(0,R)$ and with $H(c,0)=0$.
Write
\[ H_c(z) = H(c,z) = \sum_{j\geq \ell} \sum_{i\geq 0} B_{i,j} c^i z^j, \]
where $\ell\geq 0$ is the smallest integer such that some
coefficient $B_{i,\ell}$ is nonzero.
(Such $\ell$ exists since the original function $f_c(z)$ is a nonconstant
rational function of $z$.)
In fact, we have $\ell\geq 1$, since $H(c,0)=0$.
Furthermore,
\begin{equation}
\label{eq:HBbound}
|B_{i,j}|S^i R^j \leq R
\quad \text{for all }i\geq 0 \text{ and }j\geq \ell,
\end{equation}
since the image of $H$ is contained in $D(0,R)$.
Note also that for any $c\in D(0,S)$, the Weierstrass degree of
$H_c$ on $D(0,R)$ is at most $\deg(f_c^{N-M}) = d^{N-M}$.

For each radius $s\in (0,S)$, define
\[ \calE_s := \bigg\{ y\in\Dbar(0,s) \, : \,
\sum_{i\geq 0} B_{i,\ell} y^i = 0 \bigg\}. \]
Note that $\calE_s$ is finite, because it is the set of zeros 
of a nontrivial power series on a disk $\Dbar(0,s)$
which is strictly contained in a larger disk $D(0,S)$ on which the series converges.
For each $y\in\calE_s$, we may choose a radius $\sigma(y)>0$ such that
\[ 0 < \bigg| \sum_{i\geq 0} B_{i,\ell} c^i \bigg| <
R^{1-\ell} \min\big\{ |m|^m \, : \, 1\leq m\leq d^{N-M}\big\}
\quad \text{for all }
c\in \Dbar\big(y,\sigma(y)\big)\smallsetminus\{y\}. \]
Hence, for any $c\in \Dbar(y,\sigma(y))\smallsetminus \{y\}$,
if we set $h(z)=H_c(z)$ and $\gamma:=-\beta(c)$, then
one of the three conclusions of Lemma~\ref{lem:bijlvar} holds.
That is, either
\begin{itemize}
\item $H_c(-\beta(c))=0$, i.e., $\Psi_c(0)=\beta(c)$, or
\item $z=0$ has infinite forward orbit under $\Psi_c$, or 
\item $\Psi_c$ has a critical point in $D(0,R)$ with infinite forward orbit under $\Psi_c$.
\end{itemize}

\smallskip
\smallskip

\textbf{Step 4}. It remains to consider parameters $c$ in the set
\[ W:= \Dbar(0,s) \smallsetminus \bigg(
\bigcup_{y\in\calE_s} \Dbar(y,\sigma(y)) \bigg).\]
By Lemma~\ref{lem:smallpoly} and the definition of $\calE_s$, there is some $\eps>0$ such
\[ C_c \geq \eps \quad\text{for all } c\in W,
\quad \text{where } C_c := \bigg|\sum_{i\geq 0} B_{i,\ell}c^i\bigg| .\]
Let $r:=t_s R$, and 
recall from equation~\eqref{eq:tcrmult} that
the power series $\beta$ maps $\Dbar(0,s)$ into $\Dbar(0,r)\subsetneq D(0,R)$.
%
Then for all $(c,z) \in \Dbar(0,s)\times\Dbar(0,r)$, we have
\[ \big|H_c(z)\big| = \big| \Psi_c\big( z+ \beta(c) \big) - \Psi_c\big(\beta(c)\big) \big|
\leq t_s |z|, \]
by Claim~\ref{cl:attrclaim} and the fact that $\Psi_c(\beta(c))=\beta(c)$.
Pick $n\geq 1$ large enough that $t_s^n < \eps r^{\ell -1}$.
Then $H_c^n(-\beta(c))\in D(0,\eps r^{\ell})$ for any $c\in W$.

Furthermore, we claim that
\begin{equation}
\label{eq:shrinkcont}
|H_c(z)| = C_c |z|^{\ell}
\quad\text{for any }
(c,z)\in W\times D(0,\eps r^{\ell}).
\end{equation}
Indeed, for any such $(c,z)$ with $z\neq 0$,
and for any $i\geq 0$ and $j\geq\ell+1$, we have
\[ 
\big| B_{i,j} c^i z^j \big|
\leq \big| B_{i,j} \big| S^i R^{j-1} \bigg(\frac{|z|}{R} \bigg)^{j-1} |z|
\leq \bigg(\frac{|z|}{R} \bigg)^{\ell} |z|
\leq \frac{|z|}{r^{\ell}} |z|^{\ell}
< \eps |z|^{\ell} \leq C_c |z|^{\ell},
\]
using inequality~\eqref{eq:HBbound}.
Thus, the $z^{\ell}$ term in the power series for $H_c(z)$ has strictly
larger absolute value than all the other terms, yielding
equation~\eqref{eq:shrinkcont}, as claimed.

In addition, we have $0<C_c |z|^{\ell}< |z|$ for any such $(c,z)$,
because $C_c R^\ell \leq R$, by inequality~\eqref{eq:HBbound} again.
That is, by equation~\eqref{eq:shrinkcont}, we have $0< |H_c(z)| < |z|$.
Thus, for any $c\in W$, since we have $H_c^n(-\beta(c))\in D(0,\eps r^{\ell})$,
it follows either that $H_c^n(-\beta(c))=0$,
or else that $-\beta(c)$ has infinite forward orbit under $H_c$.
Hence, either
\begin{itemize}
\item $\Psi_c^n(0)=\beta(c)$, or
\item $z=0$ has infinite forward orbit under $\Psi_c$.
\end{itemize}

\smallskip

\textbf{Step 5}.
For any $s\in (0,S)$, let $\calE_s$ be the finite set from Step~3,
and let $n\geq 1$ be the integer chosen in Step~4 (so that $t_s^n < \eps r^{\ell}$).
By the bullet lists at the end of Steps~3 and~4, for any $c\in\Dbar(0,s)\smallsetminus\calE_s$,
we have either
\begin{enumerate}
\item $\Psi_c^n(0)=\beta(c)$, or
\item $z=0$ has infinite forward orbit under $\Psi_c$, or 
\item $\Psi_c$ has a critical point in $D(0,R)$ with infinite forward orbit under $\Psi_c$.
\end{enumerate}
If statement~(a) occurs for infinitely many parameters $c\in\Dbar(0,s)$, then the power series
$\Psi_c^n(0)-\beta(c) \in \Cp[[c]]$ must be trivial, since it converges on the strictly larger disk $D(0,S)$.
Thus, we would have $\Psi_c^n(0)=\Psi_c^{n+1}(0) = \beta(c)$ for all $c\in D(0,S)$, and hence
\[ f_c^{(n+1)N-nM}(\alpha(c)) = f_c^{nN-(n-1)M}(\alpha(c))
\quad \text{for all } c\in D(0,S), \]
yielding conclusion~(b) of Theorem~\ref{thm:isol}.

Otherwise, the set $\calE'_s:=\{ c\in \Dbar(0,s) : \Psi_c^n(0)=\beta(c)\}$ is finite.
For any $c\in\Dbar(0,s)$ outside the finite set $\calE_s\cup\calE'_s$,
either statement~(b) or statement~(c) above occurs.
For a given such $c$, statement~(b) implies that $\alpha(c)$ has
infinite forward orbit under $f_c$, and statement~(c) implies that
$f_c$ has a critical point in $U_{M}\cup\cdots\cup U_{N-1}$ with infinite forward orbit.
Hence, conclusion~(a) of Theorem~\ref{thm:isol} holds, completing Case~1.

\smallskip
\smallskip

\textbf{Case 2: Indifferent}.
Assume for the remainder of the proof that $|A_{0,1}|=1$.
Then for each $c\in D(0,S)$, the power series $\Psi_c(z)\in\Cp[[z]]$
has Weierstrass degree~1 and
maps $D(0,R)$ bijectively onto itself.

\smallskip
\smallskip

\textbf{Step~1}.
Because the residue field of $\Cp$ is isomorphic to
$\Fpbar$, there is an integer $e\geq 1$ such that $|A_{0,1}^e - 1| < 1$.
Replacing $N$ by $e(N-M)+M$ in the statement of Theorem~\ref{thm:isol}
and in the definition of $\Psi(c,z)=\Psi_c(z)$ at the start of this proof,
we may assume that $e=1$, and hence that $|A_{0,1}-1|<1$.

Given a fixed radius $s\in (0,S)$, choose $\tilde{s} \in (s,S)$, and
pick a radius $r$ with
\[ \max\bigg\{ |A_{0,0}|, \frac{R\tilde{s}}{S} \bigg\} < r < R . \]
Then $|A_{0,0}|< r$, $|A_{0,1} - 1| r < r$, and for all $j\geq 2$,
\[ \big|A_{0,j}\big| r^j = \big|A_{0,j}\big|R^j \Big(\frac{r}{R}\Big)^j
\leq R \Big(\frac{r}{R}\Big)^2 = \Big( \frac{r}{R} \Big) r, \]
via inequality~\eqref{eq:ASRbound}.
Pick a real number $t$ with
\[ \max\bigg\{ \frac{|A_{0,0}|}{r}, |A_{0,1}-1|,
\frac{r}{R}, \frac{R\tilde{s}}{rS} \bigg\} < t < 1. \]
Then by the above bounds, we have
\begin{equation}
\label{eq:A0jbound}
|A_{0,j}|r^j < t r
\quad \text{for all } j\geq 2 \text{ and } j=0,
\quad\text{with } |A_{0,1} - 1| r < t r.
\end{equation}
In addition, for all $i\geq 1$ and all $j\geq 0$,
inequality~\eqref{eq:ASRbound} yields
\begin{equation}
\label{eq:Aijbound}
|A_{i,j}| \tilde{s}^i r^j \leq |A_{i,j}| S^i R^j \bigg(\frac{\tilde{s}}{S}\bigg)^{i}
\leq R \bigg(\frac{\tilde{s}}{S}\bigg) < t r.
\end{equation}
Combining inequalities~\eqref{eq:A0jbound} and~\eqref{eq:Aijbound},
then, we have
\begin{equation}
\label{eq:trbound}
\big| \Psi_c(z) - z \big| < t r
\quad \text{for all } (c,z)\in \Dbar(0,\tilde{s})\times \Dbar(0,r).
\end{equation}

\smallskip
\smallskip

\textbf{Step~2}.
The \emph{iterative logarithm} of $\Psi_c$, defined
in~\cite[D\'{e}finition~3.7]{Riv1}, is the function
\begin{equation}
\label{eq:Hdef}
\Lambda(c,z) := \lim_{n\to\infty} p^{-n} \big( \Psi_c^{p^n}(z) - z \big).
\end{equation}
According to \cite[Lemme~3.11(iii)]{Riv1},
the bound~\eqref{eq:trbound} yields that
\[ \big| \Lambda(c,z) - p^{-n} \big( \Psi_c^{p^n}(z) - z \big)\big|
\leq C_{n} r |p|^n \quad \text{for all }
(c,z)\in \Dbar(0,\tilde{s})\times \Dbar(0,r)
\text{ and } n\geq 1, \]
where, as shown in the proof of \cite[Lemme~3.11(iii)]{Riv1},
\begin{equation}
\label{eq:Ccndef}
C_{n} := \max_{k\geq 1}  |k|^{-1}  \Big(t |p|^{-1/(p^n(p-1))}\Big)^k.
\end{equation}
Note that for all $n$ large enough, the expression inside the maximum
on the right side of equation~\eqref{eq:Ccndef} approaches $0$ as
$k\to\infty$, since $0<t<1$;
thus, $C_{n}$ is defined and finite for all such $n$.
Moreover, the sequence $\{C_n\}$ is decreasing.
Hence, the limit of equation~\eqref{eq:Hdef} in fact converges uniformly 
on $(c,z)\in \Dbar(0,\tilde{s}) \times \Dbar(0,r)$.
Therefore, $\Lambda(c,z)\in\Cp[[c,z]]$
is a power series converging on $\Dbar(0,\tilde{s})\times\Dbar(0,r)$.

In addition,
by \cite[Proposition~3.16]{Riv1}, a point
$(c,z)\in \Dbar(0,\tilde{s})\times \Dbar(0,r)$
is a zero of $\Lambda$ if and only if $z$ is periodic under $\Psi_c$.
Since $\Psi_c:\Dbar(0,r)\to \Dbar(0,r)$ is bijective, this condition is
equivalent to saying that $z$ is preperiodic under $\Psi_c$.

\smallskip

\textbf{Step 3}.
Define $F(c):= \Lambda(c,0)\in\Cp[[c]]$, which is a power series converging
on $\Dbar(0,\tilde{s})$, with zeros precisely at the parameters $c$ for which
$z=0$ is preperiodic under $\Psi_c$, i.e., for which $\alpha(c)$
is preperiodic under $f_c$.
If $F$ is not identically zero, then $F$ has only finitely many
zeros in the strictly smaller disk $\Dbar(0,s)$.
That is, there are only finitely many $c\in\Dbar(0,s)$ for which $\alpha(c)$
is preperiodic under $f_c$, implying conclusion~(a) of Theorem~\ref{thm:isol}.

Otherwise, we have $F=0$, and hence $\alpha(c)$ is preperiodic under $f_c$
for all $c\in \Dbar(0,\tilde{s})$.
There are only countably many choices of integers $n>m\geq 0$, and hence
there must be some such choice of $m$ and $n$ such that
uncountably many points $c\in D(0,S)$ are zeros of the power series
$f_c^n(\alpha(c))-f_c^m(\alpha(c))$.
A power series with uncountably many zeros must be identically zero,
and hence $f_c^n(\alpha(c))=f_c^m(\alpha(c))$ for all $c\in D(0,S)$,
yielding conclusion~(b).
\end{proof}

\section{Proofs of main theorems}
\label{sec:cor}
Our main results are all consequences of Theorem~\ref{thm:isol}, using
Thurston's powerful rigidity theorem, which we state below as Theorem~\ref{thm:thurston},
in the language of critical orbit relations. Fix an integer $d\geq 2$, and consider
a separable rational function $f(z)$ of degree $d$ with marked critical points
$\alpha_1,\ldots,\alpha_{2d-2}$.
A \emph{set of critical orbit relations} for $f$ is a set of equations of the form
$f^m(\alpha_i)=f^n(\alpha_j)$ for integers $m,n\geq 0$ and indices $i,j\in\{1,\ldots, 2d-2\}$,
including at least one equation of the form $f^{n_i}(\alpha_i)=f^{m_i}(\alpha_i)$
for each $\alpha_i$, with $n_i > m_i \geq 0$. Thus, a set of critical orbit relations
specifies a finite forward orbit for each critical point, and it may also include other
restrictions, such as that some of the critical points coincide,
or that, for example, $f^3(\alpha_1)=f^4(\alpha_2)$.

\begin{thm}[Thurston Rigidity]
\label{thm:thurston}
Fix $d\geq 2$, and let $\calC$ be a set of critical orbit relations.
Assume that $\calC$ is not consistent with a flexible Latt\`{e}s map.
Then, up to conjugacy by linear fractional transformations, there are only
finitely many rational functions $f(z)\in\CC(z)$ of degree $d$
satisfying the relations of $\calC$.
\end{thm}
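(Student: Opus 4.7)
The plan is to follow Thurston's original Teichm\"uller-theoretic approach. First I would translate the algebraic statement into a topological one: any rational map $f\in\CC(z)$ of degree $d$ satisfying $\calC$ gives, after identifying $\PC$ with the oriented $2$-sphere $S^2$, a postcritically finite orientation-preserving branched self-cover $F:S^2\to S^2$ of degree $d$, with finite postcritical set $P\subseteq S^2$ whose combinatorial dynamics are dictated by $\calC$. Up to isotopy rel $P$, there are only finitely many such topological models for a given $\calC$ (a Hurwitz-style counting argument in terms of permutation factorizations in the symmetric group on $d$ letters), so it suffices to show that, for each topological model $(F,P)$ not arising from a flexible Latt\`es family, there is at most one rational realization of $(F,P)$ modulo conjugacy by $\PGL(2,\CC)$.

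Next I would introduce the finite-dimensional Teichm\"uller space $\calT_P:=\calT(S^2,P)$ of complex structures on $S^2$ modulo isotopies fixing $P$ pointwise. The inclusion $P\subseteq F^{-1}(P)$ induces a forgetful holomorphic map $\calT_{F^{-1}(P)}\to\calT_P$, and pullback of complex structures via $F$ gives a holomorphic map $\calT_P\to\calT_{F^{-1}(P)}$; their composition is Thurston's pullback map $\sigma_F:\calT_P\to\calT_P$. The functorial content is that a fixed point of $\sigma_F$ is precisely a complex structure making $F$ holomorphic, i.e.\ a realization of $(F,P)$ as a rational map modulo M\"obius conjugacy. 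Uniqueness of rational realizations of $(F,P)$ is therefore equivalent to uniqueness of fixed points of $\sigma_F$.

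To prove the latter I would appeal to two analytic facts. By Royden's theorem, the Teichm\"uller metric coincides with the Kobayashi metric on $\calT_P$, so the holomorphic self-map $\sigma_F$ is automatically non-expanding. More delicately, $\sigma_F$ is a \emph{strict} contraction at any fixed point whose associated Thurston orbifold $\calO_F$ is hyperbolic. By Riemann--Hurwitz, $\calO_F$ is Euclidean precisely when its signature is one of $(2,2,2,2)$, $(2,3,6)$, $(2,4,4)$, or $(3,3,3)$; these correspond exactly to (flexible or rigid) Latt\`es maps, and rigid Latt\`es maps form a countable discrete set coming from a finite list of CM elliptic curves with specified endomorphism and quotient data. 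Excluding the flexible Latt\`es case thus leaves only hyperbolic orbifolds and countably many rigid exceptions, and strict contraction at hyperbolic fixed points forces finiteness of realizations.

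The hardest step will be establishing the strict-contraction property of $\sigma_F$ in the hyperbolic-orbifold case; this is the analytic heart of Thurston's rigidity theorem. A complete argument requires either a direct operator-norm estimate for the derivative of $\sigma_F$ acting on the cotangent space of holomorphic quadratic differentials, or, equivalently, a combinatorial analysis of Thurston obstructions, showing that no $F$-invariant multicurve on $S^2\setminus P$ carries a Thurston transition matrix of spectral radius $\geq 1$ when $\calO_F$ is hyperbolic. The flexible Latt\`es hypothesis enters exactly here, to rule out the single Euclidean-orbifold case in which obstruction multicurves---and with them the one-parameter families of rational realizations coming from moduli of elliptic curves---do appear.
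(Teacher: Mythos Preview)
The paper does not prove this theorem; it is quoted as a known result, citing Thurston's lectures, Douady--Hubbard for the proof, and \cite[Corollary~3.7]{BBLPP} for the translation into the critical-orbit-relations formulation. Your proposal sketches the Douady--Hubbard Teichm\"uller approach, so you are reconstructing the cited argument rather than comparing with anything the paper itself supplies.

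Your first reduction step, however, needs more care. Hurwitz enumeration (permutation factorizations in $S_d$) classifies branched \emph{covers} of $(S^2,P)$ up to covering isomorphism, and there are indeed finitely many of those. But a Thurston map $F:(S^2,P)\to(S^2,P)$ is a \emph{self}-map: it carries the additional datum of an identification of the source sphere with the target, and this lives in the infinite mapping class group $\mathrm{Mod}(S^2,P)$. The relationship between Hurwitz classes and Thurston equivalence classes is subtle --- witness the twisted-rabbit problem, where $T\circ R$ and $R$ share Hurwitz data yet deciding their Thurston class is highly nontrivial --- and a bare permutation count does not bound the number of Thurston classes. (Note too that a set $\calC$ of relations in the paper's sense need not pin down a single portrait: $f^2(\alpha)=\alpha$ allows period $1$ or $2$, so one must first refine to the finitely many minimal portraits consistent with $\calC$.)

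The standard route to the finiteness statement avoids this issue by working directly in the moduli space $\calM_d$ of degree-$d$ rational maps. The maps satisfying $\calC$ cut out an algebraic subvariety; if it were positive-dimensional, any holomorphic one-parameter family $f_t$ in it would have postcritical sets moving by a holomorphic motion, making all the $f_t$ mutually Thurston equivalent, whereupon the uniqueness half of Thurston's theorem (your Steps~2--3, which are correctly outlined) forces the family to be constant in $\calM_d$. The contraction estimate for $\sigma_F$ that you flag as the hard step is indeed the analytic core, and, as the paper does, the honest move is simply to cite Douady--Hubbard for it.
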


The original form \cite{Thu;82} of Thurston's result is different;
see also \cite[Theorem~1]{DH;93} for the statement and proof.
An explanation of why the original implies Theorem~\ref{thm:thurston} appears
in \cite[Corollary~3.7]{BBLPP}.
As with Milnor's criterion, the fact that $\CC$ and $\Cp$ are isomorphic as abstract fields
shows that Theorem~6.1 also holds for $\Cp$ in place of $\CC$.

\begin{proof}[Proof of Theorem~\ref{thm:pcfisol}]
For each $i=1,\ldots, 2d-2$,
one of the conclusions of Theorem~\ref{thm:isol} holds for $\alpha_i(c)$.
If there is some such $i$ for which conclusion~(a) of Theorem~\ref{thm:isol} holds,
then conclusion~(a) of Theorem~\ref{thm:pcfisol} holds, and we are done.

Thus, we may assume for the remainder of the proof that 
conclusion~(b) of Theorem~\ref{thm:isol} holds for every $\alpha_i(c)$.
That is, for each $i=1,\ldots, 2d-2$, there exist  integers $n_i>m_i\geq 0$ such that
\begin{equation}
\label{eq:mini}
f_c^{n_i}\big(\alpha_i(c)\big) = f_c^{m_i}\big(\alpha_i(c)\big)
\end{equation}
for all $c\in D(0,S)$.
In addition, for each $i\neq j$, there may be integers $e_{i,j}, e_{j,i}\geq 0$ so that
\begin{equation}
\label{eq:mini2}
f_c^{e_{i,j}}\big(\alpha_i(c)\big) = f_c^{e_{j,i}}\big(\alpha_j(c)\big)
\end{equation}
for uncountably many $c\in D(0,S)$.
Without loss, for each $i$, the integers $n_i>m_i\geq 0$
are the smallest for which equation~\eqref{eq:mini} holds for uncountably many $c$.
(That is, for any other such $n'_i>m'_i\geq 0$, we have $n'_i\geq n_i$
and $m'_i\geq m_i$.) Similarly, for each relevant $i\neq j$,
we may assume that the integers $e_{i,j},e_{j,i}\geq 0$
are the smallest for which equation~\eqref{eq:mini2} holds for uncountably many $c$.

Since the meromorphic functions $f_c^{n_i}(\alpha_i(c)) - f_c^{m_i}(\alpha_i(c))$
and $f_c^{e_{i,j}}(\alpha_i(c)) - f_c^{e_{j,i}}(\alpha_j(c))$ have
uncountably many zeros, they are trivial on $D(0,S)$. Thus,
\begin{equation}
\label{eq:every}
\text{\framebox{the relations
of equations~\eqref{eq:mini} and~\eqref{eq:mini2} hold for \emph{all} $c\in D(0,S)$.}}
\end{equation}
However, for each $i$,
and for each of the finitely many smaller choices of integers $\ell > k \geq 0$,
there could be (at most) countably many parameters $c\in D(0,S)$ for which
$f_c^k(\alpha_i(c))=f_c^{\ell}(\alpha_i(c))$. Similarly, for each $i\neq j$
and each of the finitely many smaller choices of $k,\ell\geq 0$,
there could be (at most) countably many parameters $c\in D(0,S)$ for which
$f_c^k(\alpha_i(c))=f_c^{\ell}(\alpha_j(c))$.


Suppose that the relations of equations~\eqref{eq:mini} and~\eqref{eq:mini2}
are consistent with a flexible Latt\`{e}s map.
That is, for uncountably many parameters $c\in D(0,S)$,
all $2d-2$ critical points of $f_c$ are distinct,
and the strictly postcritical set consists of four points, none of which are critical.
By Lemma~\ref{lem:lattes},
either conclusion~(b) or conclusion~(c) of Theorem~\ref{thm:pcfisol} holds,
and we are done.

Otherwise, by Thurston Rigidity (Theorem~\ref{thm:thurston}),
there are only finitely many rational functions,
up to conjugacy, that satisfy the critical orbit relations~\eqref{eq:mini}
and~\eqref{eq:mini2}. Thus, there exist some $g(z)\in\Cp(z)$ and an uncountable
subset $W\subseteq D(0,S)$ such that $f_c$ is conjugate to $g$ for all $c\in W$.
That is, for any $c\in W$, there exists $h_c\in\PGL(2,\Cp)$ such that
$f_c=h_c^{-1} \circ g \circ h_c$.
We consider two cases.

\smallskip

\textbf{Case 1}.
If the forward orbits of the critical points of $g$ together contain at least three distinct points
$\gamma_1, \gamma_2, \gamma_3$, then there are corresponding distinct
meromorphic functions
\[ \beta_i(c) := f_c^{e_i} \big(\alpha_{j_i}(c)\big)
\quad\text{for }i=1,2,3, \]
for some indices $j_i\in\{1,\ldots,2d-2\}$ and some integers $e_i\geq 0$,
such that $h_c(\beta_i(c))=\gamma_i$ for each $c\in W$ and each $i=1,2,3$.
Thus, by Lemma~\ref{lem:triv}, conclusion~(c) of Theorem~\ref{thm:pcfisol} holds;
again, we are done.

\smallskip

\textbf{Case 2}.
Finally, suppose that the union of the forward orbits of the critical points of $g$
consists of at most two points. Since $g$ must have at least two distinct critical points,
this means that it has exactly two,
either with each of them fixed or with each of them mapping to the other.
Thus, after a change of coordinates, we may assume that $g(z)=z^d$
or that $g(z)=1/z^d$, with critical points at $z=0,\infty$.

Since $f_c$ is conjugate to $g$ for uncountably many $c$,
and since equations~\eqref{eq:mini} and~\eqref{eq:mini2} are optimal for such $c$,
then possibly after reindexing, those equations say that
\begin{equation}
\label{eq:power}
\alpha_1(c)=\alpha_2(c)=\cdots =\alpha_{d-1}(c)
\quad\text{and}\quad
\alpha_d(c)=\alpha_{d+1}(c)=\cdots =\alpha_{2d-2}(c)
\end{equation}
with either
\begin{equation}
\label{eq:fc1}
f_c\big(\alpha_1(c)\big)= \alpha_1(c)
\quad\text{and}\quad f_c\big(\alpha_d(c)\big)= \alpha_d(c)
\end{equation}
if $g(z)=z^d$, or
\begin{equation}
\label{eq:fc2}
f_c\big(\alpha_1(c)\big)= \alpha_d(c)
\quad\text{and}\quad f_c\big(\alpha_d(c)\big)= \alpha_1(c)
\end{equation}
if $g(z)=1/z^d$.
By comment~\eqref{eq:every}, equations~\eqref{eq:power} hold for every $c\in D(0,S)$,
as do either equations~\eqref{eq:fc1} if $g(z)=z^d$,
or equations~\eqref{eq:fc2} if $g(z)=1/z^d$.
Moreover, we must have $\alpha_1(c)\neq\alpha_d(c)$ for all $c\in D(0,S)$;
indeed, if some parameter $c_0$ had $\alpha_1(c_0)=\alpha_d(c_0)$,
then $\alpha_1(c_0)$ would have more than $d$ preimages under $f_{c_0}$,
counted with multiplicity.

Hence, for any $c\in D(0,S)$, we may change coordinates in the $z$-variable
to move $\alpha_1(c)$ to $0$, and to move $\alpha_d(c)$ to $\infty$.
Therefore, if $g(z)=z^d$, 
equations~\eqref{eq:power} and~\eqref{eq:fc1} together imply that
for every $c\in D(0,S)$, the rational function $f_c$ is conjugate to $z\mapsto z^d$.
Similarly, if $g(z)=1/z^d$, 
equations~\eqref{eq:power} and~\eqref{eq:fc2} together imply that
for every $c\in D(0,S)$, the rational function $f_c$ is conjugate to $z\mapsto 1/z^d$.
Either way, $f_c$ is conjugate to $g$ for every $c\in D(0,S)$,
yielding conclusion~(c) of Theorem~\ref{thm:pcfisol}.
\end{proof}


\begin{proof}[Proof of Theorem~\ref{thm:goodpcf}]
%
For each point $a\in\PCp$, let $W({\bar{a}})$ be
the inverse image of the
point $\bar{a}\in\PP^1(\Fpbar)$ under the reduction map $\PCp\to \PP^1(\Fpbar)$.
That is, if $|a|\leq 1$, then $W({\bar{a}})=D(a,1)$, and if $|a|>1$,
then $W({\bar{a}})=\PCp\smallsetminus\Dbar(0,1)$.

For each $i=1,\ldots,2d-2$ and for every $j\geq 0$,
let $U_{i,j}:=W\big(\overline{f_0^j(\alpha_i(0))}\big)$.
By the fourth bullet point of the hypotheses, we have
$\alpha_i(c)\in U_{i,0}$ for every $c\in D(0,S)$.
Furthermore, by the first and second bullet points,
for every $j\geq 0$ and every $c\in D(0,S)$, we have
\begin{align*}
f_c(U_{i,j}) &= f_c\Big( W\Big(\overline{f_0^j(\alpha_i(0))}\Big) \Big)
\subseteq W\Big( \bar{f}_c \Big( \overline{f_0^j(\alpha_i(0))} \Big) \Big)
\\
& =W\Big( \bar{f}_0 \Big( \overline{f_0^j(\alpha_i(0))} \Big) \Big)
=W\Big( \overline{f_0^{j+1}(\alpha_i(0))} \Big) = U_{i,j+1} .
\end{align*}
In addition, because $\overline{f}_0\in\Fpbar(z)$ and
$\overline{\alpha_i(0)}\in\PP^1(\Fpbar)$ must both be defined over
some finite subfield of $\Fpbar$,
there exist some $N_i > M_i \geq 0$ such that
$\bar{f}_0^{N_i} (\overline{\alpha_i(0)}) = \bar{f}_0^{M_i} (\overline{\alpha_i(0)})$.
Hence, $U_{i,{N_i}} = U_{i,M_i}$.

We have verified that the family $f_c$ satisfies all the hypotheses
of Theorem~\ref{thm:pcfisol} holds.
Therefore, one of the three desired conclusions follows.
\end{proof}

\begin{proof}[Proof of Theorem~\ref{thm:specialpcf}]
Counting with multiplicity, the map $f_c(z)=z^d+c$ has $d-1$ critical points at $z=0$,
and the other $d-1$ at the (fixed) point at $\infty$.

For any $c\in\Cp$ with $|c|>1$, and for any $x\in\Cp$ with $|x|>|c|^{1/d}$,
we have $|f_c(x)| = |x|^d > |x|$. Since $|f_c(0)|=|c|> |c|^{1/d}$ for such $c$, it follows that
\[ 0 < |f_c(0)| < |f^2_c(0)| < |f^3_c(0)| < \cdots ,\]
and hence the critical point at $z=0$ is not preperiodic. 
Thus, $f_c$ is not PCF for $|c|>1$, proving the first claim of Theorem~\ref{thm:specialpcf}.

Given any $a\in\Cp$ with $|a|\leq 1$, then for any $c\in D(0,1)$,
define $g_c(z):=z^d + a+c$, and also define
$\alpha_i(c):=0$ for each $i=1,\ldots, d-1$ and $\alpha_i(c):=\infty$ for $i=d,\ldots,2d-2$.
Then for each $c\in D(0,1)$, the map $g_c$ has explicit good reduction, with
$\bar{g}_c(z) = z^d + \bar{a} = \bar{g}_0(z)$, and clearly with
$\overline{\alpha_i(c)}=\overline{\alpha_i(0)}$.
Hence, by Theorem~\ref{thm:goodpcf}, either
every $g_c$ is flexible Latt\`{e}s, or the family is isotrivial, or our desired conclusion holds.

However, $g_c$ is not flexible Latt\`{e}s. Indeed, for $d\geq 3$, there are repeated critical
points, violating Milnor's criterion; and for $d=2$, the degree of $g_c$ is not a square.
That is, the second conclusion of Theorem~\ref{thm:goodpcf} cannot hold.

In addition, we claim that there are only finitely many $c\in D(0,1)$ for which $g_c$
is conjugate to $g_0$. Indeed, if $a=0$, then no other $g_c$ is conjugate to $g_0(z)=z^d$,
since $z^d$ is the only map in the family with all critical points fixed. If $a\neq 0$,
then both $g_0$ and $g_c$ have fixed critical points at $z=\infty$ and non-fixed critical points
at $z=0$. Thus, if $g_c=h^{-1} \circ g_0 \circ h$ for some $h\in\PGL(2,\Cp)$,
then we must have $h(0)=0$ and $h(\infty)=\infty$, so that
$h(z)=bz$ for some $b\in\Cp^{\times}$. Therefore,
\[ z^d + a+c = g_c(z) = b^{-1} g_0(bz) = b^{d-1} z^d + b^{-1}(a+c), \]
so that $b$ must be a $(d-1)$-st root of unity. Hence there are at most $d-1$ choices
of $c$ with $g_c$ conjugate to $g_0$, proving our claim.

Therefore, the family $g_c$ is neither Latt\`{e}s nor isotrivial.
By Theorem~\ref{thm:goodpcf}, then, our desired conclusion holds.
\end{proof}

\section{Examples}
\label{sec:ex}

In this section, we present examples to justify our earlier claims that
our main theorems cannot be significantly strengthened.
In particular, Examples~\ref{ex:d2p2} and~\ref{ex:d2p3} illustrate that
the conclusions of Theorem~\ref{thm:specialpcf} cannot be significantly strengthened
even for the quadratic family $f_c(z)=z^2+c$.
Moreover, Example~\ref{ex:juan} illustrates the necessity of the
stability hypotheses of Theorems~\ref{thm:pcfisol} and~\ref{thm:goodpcf}.


\begin{example}
\label{ex:d2p2}
Consider the family $f_c(z)=z^2+c$ of Theorem~\ref{thm:specialpcf},
with $d=2$ and $p=2$.
Then both $c=0$ and $c=-2$ are PCF parameters of $f_c$ lying in the $2$-adic
disk $D(0,1)$, and all three of $c=-1$ and $c=\pm i$ are PCF parameters in the $2$-adic
disk $D(1,1)$.
More generally, the roots of the polynomial
\[ g_n(c):=f_c^n(0)+f_c^{n-1}(0) \]
are parameters $c$ for which the orbit of the critical point $z=0$ satisfies $f_c^{n+1}(0)=f_c^n(0)$
but no simpler critical orbit relation. However, it is easy to check that
$g_n(c)\equiv c^{2^{n-1}}\pmod{2}$,
and therefore all roots of $g^n$ lie in the $2$-adic disk $D(0,1)$.
For any such root $c$, the map $f_c$ has an attracting fixed point $\beta\in D(0,1)$,
and the critical point $z=0$ lands on $\beta$ after exactly $n$ iterations.
In particular, there are infinitely many PCF parameters in the disk $D(0,1)$.
\end{example}

\begin{example}
\label{ex:d2p3}
Consider the family $f_c(z)=z^2+c$ of Theorem~\ref{thm:specialpcf},
with $d=2$, and this time with $p=3$.
For any parameter $c$ in the $3$-adic disk $D(1,1)\subseteq\CC_3$,
we have $f_c(0)\equiv 1\pmod{3}$ and $f_c^n(0)\equiv -1 \pmod{3}$ for all $n\geq 2$.
Changing coordinates via $c=1+b$ and $z=-1+w$, consider the family
\[ F_b(w):= f_{1+b}(w-1)+1 = w^2 -2w + (b+3)\in\calO_3[[b,w]], \]
where $\calO_3$ is the ring of integers in $\CC_3$.

For each $n\geq 1$, let 
$I_n:= \la 3, w^{n+1} \ra + b \la b,w\ra^{n-1}$, which is an ideal
of the ring $R:=\calO_3[[b,w]]$.
The reduction of $F_b$ modulo the ideal $I_1:=\la 3,b,w^2\ra$ 
is simply $w\mapsto w$.
Inspired by a similar idea in \cite[Proposition~4.1]{Lub},
it is a simple exercise
to check that if $G,H\in R$ are of the form
$G(w)\equiv w+A \,\,\mod{I_{n+1}}$ and $H(w)\equiv w+B \,\,\mod{I_{n+1}}$
with $A,B\in I_n$, then $G\circ H(w) \equiv w+A+B \,\, \mod{I_{n+1}}$.
Therefore, $G^3(w)\equiv w \,\,\mod{I_{n+1}}$.
By induction, it follows that $F_b^{3^n}(w)\equiv w \,\,\mod{I_{n+1}}$ for every $n\geq 0$,
and hence that $F_b^{3^n}(-b)\equiv -b \,\,\mod{\la 3,b^{n+1} \ra}$.
On the other hand, viewed as a polynomial in the two variables $b,w$,
the highest total-degree term of $F_b^{3^n}(w)$ is $w^{D_n}$, where $D_n=2^{3^n}$.
Thus,
\[ h_n(b):= F_b^{3^n}(-b)+b \in \calO_3[b] \]
is a monic polynomial of degree $D_n$
whose reduction modulo $3$ has a zero of order at least $n+2$ at $\bar{b}=0$.
In particular, $h_n$ has at least $n+2$ roots in the disk $D(0,1)\subseteq\CC_3$.

The critical points of $F_b(w)$ are $w=1,\infty$, and we have
$F_b(1)=b+2$, and
\[ F_b^2(1) = F_b(b+2)=F_b(-b)=b^2+3b+3. \]
Thus, every root $b$ of $h_n$ satisfies $F_b^{3^n}(b+2) = -b$,
and hence $F_b^{2+3^n}(1) =F_b^2(1)$.
That is, every such $b$ is a PCF parameter of the family $F_b$.
Since $h_n$ has at least $n+2$ roots in $D(0,1)$, and since these roots
are distinct by \cite[Lemma~4]{Buff}, it follows that
the family $F_b$ has infinitely many PCF parameters $b\in D(0,1)$.
(More precisely, Buff's result says that the polynomial $f_c^{1+3^n}(0)+f_c(0)\in\ZZ[c]$
has no repeated roots; after our change of coordinates, the same is true of $h_n$.)
Therefore, the original family $f_c$ has infinitely many PCF parameters $c\in D(1,1)$.
For any such $c$, the map $f_c$ fixes the disk $D(-1,1)$, which contains periodic
points of period $3^n$ for every $n\geq 0$; and the critical point $z=0$ lands
on one such periodic point after $2$ iterations.
\end{example}

Our final example illustrates that
the stability conditions of Theorems~\ref{thm:pcfisol} and~\ref{thm:goodpcf}
cannot be removed in general.

\begin{example}
\label{ex:juan}
Fix a prime number $p\geq 2$, and let $d=p+1$. Define
\[ f_c(z):=\bigg( c+\frac{p+1}{p} \bigg) \big( pz^{p+1} - (p+1) z^p + 1 \big), \]
which has critical points at $z=0,1,\infty$. Writing $\gamma(c):=c+(p+1)/p$, we have
\[ \infty\overset{p+1}{\longmapsto}\infty , \quad\quad
1 \overset{2}{\longmapsto} 0 \overset{p}{\longmapsto} \gamma(c), \quad\quad
\frac{p+1}{p} \longmapsto \gamma(c) , \]
where the numbers above the arrows indicate ramification indices.
Thus, $f_c$ is PCF if and only if $\gamma(c)$ is preperiodic.
In particular, $\gamma(0)=(p+1)/p$ is a fixed point of $f_0$, which in turn is therefore PCF.
In addition, we have $f_c'(z) = p(p+1)\gamma(c) z^{p-1}(z-1)$, and hence
\[ f'_0\big( \gamma(0) \big) = p(p+1)\gamma(0) \cdot \gamma(0)^{p-1} \cdot \frac{1}{p}
= \frac{(p+1)^{p+1}}{p^p}, \]
which has $p$-adic absolute value greater than~1. 
That is, $\gamma(0)$ is a repelling fixed point of $f_0$.
Since there are other choices of $c$ for which
$\gamma(c)$ is \emph{not} a fixed point of $f_c$,
then in the terminology of \cite[Section~5]{Riv5}, the family of rational maps $f_c$
with marked critical point $z=1$ has a Misiurewicz bifurcation at $c=0$.

By \cite[Key Lemma]{Riv5}, then, there is an infinite sequence of distinct parameters
$c_n\in\Cp$ with $c_n\to 0$ such that $z=1$ is preperiodic under $f_{c_n}$,
and hence such that $f_{c_n}$ is PCF, for every $n$. The intuitive reason such a sequence
exists is because for nonzero parameters $c$ very close to $0$,
the point $z=\gamma(c)$ will be very close to a repelling fixed point $\beta(c)$.
After many iterations, $f_c^m(\gamma(c))$ will be pushed away from $\beta(c)$,
and by a careful choice of such $c$, we can get this orbit to land wherever we like,
for example on some preimage of $\beta(c)$, yielding a new PCF parameter.
\end{example}

\textbf{Acknowledgments}
The first author gratefully acknowledges the support of NSF grant DMS-150176.
The second author gratefully acknowledges the support of Simons Foundation grant 
622375 and the hospitality of the Korea Institute for Advanced Study during his visit.
The authors also thank Dragos Ghioca, Sarah Koch, and Holly Krieger
for helpful discussions.


\end{document}